\numberwithin{equation}{section}
\definecolor{popblue}{RGB}{55,115,255}
\definecolor{lightbl}{RGB}{155,205,255}
\definecolor{depthbl}{RGB}{145,215,255}
\definecolor{fancyre}{RGB}{225,55,115}
\definecolor{darkblu}{RGB}{15,75,185}
\definecolor{mellowy}{RGB}{225,225,35}
\renewcommand{\tilde}[1]{\widetilde{#1}}
\renewcommand{\Bar}{\overline}
\renewcommand{\S}{\mathbb{S}}
\newcommand{\R}{\mathbb{R}}
\newcommand{\N}{\mathbb{N}}
\newcommand{\Z}{\mathbb{Z}}
\newcommand{\C}{\mathbb{C}}
\newcommand{\T}{\mathbb{T}}
\newcommand{\imp}{\;\Rightarrow\;}
\newcommand{\m}{\mathrm}
\newcommand{\lv}{\lVert}
\newcommand{\rv}{\rVert}
\newcommand{\al}{\alpha}
\newcommand{\be}{\beta}
\newcommand{\es}{\varnothing}
\newcommand{\ep}{\varepsilon}
\newcommand{\f}{\frac}
\newcommand{\sig}{\sigma}
\newcommand{\gam}{\gamma}
\newcommand{\del}{\delta}
\newcommand{\pd}{\partial}
\newcommand{\grad}{\nabla}
\newcommand{\bpm}{\begin{pmatrix}}
	\newcommand{\epm}{\end{pmatrix}}
\renewcommand{\bar}{\overline}
\newcommand{\emb}{\hookrightarrow}
\renewcommand{\le}{\leqslant}
\newcommand{\bnorm}[1]{\Big\lv#1\Big\rv}
\newcommand{\tnorm}[1]{\lv#1\rv}
\newcommand{\bp}[1]{\Big(#1\Big)}
\renewcommand{\sp}[1]{\big(#1\big)}
\newcommand{\tp}[1]{(#1)}
\newcommand{\tabs}[1]{|#1|}
\newcommand{\tsb}[1]{[{#1}]}
\newcommand{\bcb}[1]{\Big\{{#1}\Big\}}
\newcommand{\tcb}[1]{\{{#1}\}}
\providecommand{\tbr}[1]{\langle #1 \rangle}
\renewcommand{\bf}[1]{\mathbf{#1}}
\newcommand{\ii}{\m{i}}
\newtheorem{prop}{\color{popblue}{Proposition}}[section]
\newtheorem{thm}[prop]{\color{popblue}{Theorem}}
\newtheorem{lem}[prop]{\color{popblue}{Lemma}}
\newenvironment{customthm}[1]
{\innercustomthm}
{\endinnercustomthm}
\author{Noah Stevenson}
\address{
	Department of Mathematics\\
	Princeton University\\
	Princeton, NJ 08544, USA
}
\email[N. Stevenson]{stevenson@princeton.edu}
\thanks{N. Stevenson was supported by an NSF Graduate Research Fellowship}
\title[Roll wave solutions to the inclined shallow water equations]{
	Periodic gravity-capillary roll wave solutions to the inclined viscous shallow water equations in two dimensions
}
\subjclass[2020]{Primary 35Q35, 35C07; Secondary 35B32, 76A20}
\keywords{roll waves, traveling waves, viscous shallow water equations, bifurcation theory}
\begin{document}
	\begin{abstract}
		We study periodic, two-dimensional, gravity-capillary traveling wave solutions to a viscous shallow water system posed on an inclined plane. While thinking of the Reynolds and Bond numbers as fixed and finite, we vary the speed of the traveling frame and the degree of the incline and identify a set of the latter two parameters that classifies from which combinations nontrivial and small amplitude solution curves originate. Our principal technical tools are a combination of the implicit function theorem and a local multiparameter bifurcation theorem. To the best of the author's knowledge, this paper constitutes the first construction and mathematical study of properly two dimensional examples of viscous roll waves.
	\end{abstract}
	\maketitle
	\section{Introduction}
	
	The phenomenon of roll waves are a frequently observed manifestation of the instability of a shallow layer of water driven down an incline and are caused by the interaction of the gravitational force and the friction between the fluid and the bottom. While the mathematics and physics literature has been exploring these features for over a century now, the vast majority of rigorous nonlinear results on the existence, stability, and instability of roll waves have been exclusively established for one dimensional models such as the Saint-Venant equations with empirical modifications. The purpose of this work is to explore some of the two dimensional nature of roll waves. We carry out the first construction of families of properly two dimensional small amplitude gravity-capillary roll wave solutions to a damped viscous shallow water model representing a thin film of tilted incompressible fluid. Our solution families are also examples of traveling wave solutions to a dissipative fluid system with the notable novel feature of existing purely in the presence of a gravitational force (see the end of Section~\ref{subsection on survey of previous work} for further discussion).
	
	\subsection{The tilted shallow water equations, traveling formulation}
	
	The fluid model under consideration in this work is the damped shallow water equations, built with the effects of viscosity, gravity, and capillarity, posed on a two-dimensional inclined plane. These are the system:
	\begin{equation}\label{The time dependent inclined shallow water equation}
		\begin{cases}
			\Bar{\upeta}(\pd_t\Bar{\upupsilon}+ \Bar{\upupsilon} \cdot   \grad \Bar{\upupsilon})   
			+ \alpha\Bar{\upupsilon} - \upmu\grad\cdot(\Bar{\upeta} \mathbb{S}\Bar{\upupsilon}  ) +  \Bar{\upeta}\grad(g \Bar{\upeta} - \upsigma \Delta \Bar{\upeta}) - \upkappa e_1\Bar{\upeta} = 0,\\
			\pd_t \Bar{\upeta} + \grad\cdot(\Bar{\upeta}\Bar{\upupsilon}) =0.
		\end{cases}
	\end{equation}
	Here the unknowns are the velocity vector field $\Bar{\upupsilon}:\R^+\times\R^2\to\R^2$ and the free surface height $\Bar{\upeta}:\R^+\times\R^2\to\R^+$. The matrix field $\mathbb{S}\Bar{\upupsilon}:\R^+\times\R^2\to\R^{2\times 2}$, which is the shallow water version of the viscous stress tensor, is given by the formula
	\begin{equation}\label{viscous stress tensor}
		\mathbb{S}\Bar{\upupsilon}=\grad\Bar{\upupsilon}+\grad\Bar{\upupsilon}^{\m{t}}+2(\grad\cdot\Bar{\upupsilon})I_{2\times 2}.
	\end{equation}
	The physical parameters of system~\eqref{The time dependent inclined shallow water equation} are inverse slip coefficient $\al\in\R^+$, the surface tension coefficient $\upsigma\in\R^+$, the viscosity $\upmu\in\R^+$, and the gravitational acceleration's components $g\in\R^+$, $\upkappa\in\R$; $g$ indicates the strength of the portion of gravity which is `normal' to the fluid while $\upkappa e_1$ is the component of gravity acting parallel to the fluid (which we choose to be in the $e_1$ direction).
	
	The shallow water model~\eqref{The time dependent inclined shallow water equation}, which is also known as the Saint-Venant equations when $\al=\upmu=\upsigma=\upkappa=0$, is an important system of equations which approximates the incompressible free boundary Navier-Stokes in scenarios where the fluid depth is much smaller than its characteristic horizontal scale. Consequently, the model is both physically and practically relevant. One should therefore think of solutions to~\eqref{The time dependent inclined shallow water equation} as describing flow of a thin film of incompressible liquid down an inclined plane.
	
	A derivation of the system~\eqref{The time dependent inclined shallow water equation} from the free boundary Navier-Stokes system appended with gravitational forcing of the form $-g e_3+\upkappa e_1$ and Navier slip boundary conditions on the rigid bottom proceeds through an asymptotic expansion and vertical averaging procedure; for the full details in the case $\upkappa=0$ we refer, for instance, to the surveys of Bresch~\cite{MR2562163} or Mascia~\cite{mascia_2010} and, in the case of general applied forcing, to Appendix B.1 in Stevenson and Tice~\cite{stevenson2023shallow}. 
	
	System~\eqref{The time dependent inclined shallow water equation} evidently has a number of structural resemblances to the barotropic compressible Navier-Stokes system, with $\Bar{\upupsilon}$ and $\Bar{\upeta}$ analogous to the fluid velocity and density, respectively. The term $\Bar{\upeta}\grad\tp{g\Bar{\upeta}}=\f{g}{2}\grad\tp{\Bar{\upeta}^2}$ corresponds to a quadratic pressure law while $\upmu\Bar{\upeta}$ corresponds to viscosity coefficients which depend linearly on the density. Continuing with this analogy, we shall refer to the first equation in~\eqref{The time dependent inclined shallow water equation} as the momentum equation and the second one as the continuity equation. This latter equation dictates how the free surface is deformed and transported by the velocity field.
	
	The advective derivative $\Bar{\upeta}\tp{\pd_t\Bar{\upupsilon}+\Bar{\upupsilon}\cdot\grad\Bar{\upupsilon}}$ in the momentum equation of~\eqref{The time dependent inclined shallow water equation} is balanced by several notable terms. First, we have the shallow water limit's manifestation of the Navier slip boundary condition, which is the zeroth order frictional damping term $\al\Bar{\upupsilon}$. We note that there are other options for a friction term that appear in the literature; this effect appears to be commonly modeled with the choice of the Ch\'ezy drag term, which would have the form $\tilde{\al}|\Bar{\upupsilon}|\Bar{\upupsilon}$. This is an empirically derived force which is meant to capture so-called turbulent friction. We disregard this option of frictional damping and adopt the linear choice $\al\Bar{\upupsilon}$, as it is consistent with the aforementioned mathematical derivation of our shallow water model from the Navier-Stokes equations.
	
	Next, the momentum equation inherits a viscous damping term, $-\upmu\grad\cdot\tp{\Bar{\upeta}\mathbb{S}\Bar{\upupsilon}}$, which captures the dissipative effect of intra-fluid friction. The term $\Bar{\upeta}\grad\tp{g\Bar{\upeta}-\upsigma\Delta\Bar{\upeta}}$ encodes how changes in the geometry of the free surface influence the velocity. We note that this term can be expressed as the divergence of a gravity-capillary stress tensor, namely $\Bar{\upeta}\grad\tp{g\Bar{\upeta}-\sig\Delta\Bar{\upeta}}=\grad\cdot\mathbb{G}_{g,\upsigma}(\Bar{\upeta})$, where
	\begin{equation}\label{the gravity capilary stress tensor}
		\mathbb{G}_{g,\upsigma}(\Bar{\upeta})=\f{g}{2}\Bar{\upeta}^2I_{2\times 2}-\f{\upsigma}{2}\tp{\Delta\tp{\Bar{\upeta}^2}-|\grad\Bar{\upeta}|^2}I_{2\times 2}+\upsigma\grad\Bar{\upeta}\otimes\grad\Bar{\upeta}.
	\end{equation}
	Finally, the term $-\upkappa e_1\Bar{\eta}$ encodes that the shallow water system is posed on an inclined plane and the size of the coefficient $\upkappa$ dictates the steepness level, with $\upkappa=0$ corresponding to a flat plane.
	
	We are interested in the question of whether or not system~\eqref{The time dependent inclined shallow water equation} admits certain nontrivial solutions called roll waves - which are stationary in a frame traveling at a constant velocity down the incline. Let us begin our search for these by first identifying and reformulating perturbatively around the equilibrium solutions. One notes that for any choice of equilibrium height $H\in\R$ the system~\eqref{The time dependent inclined shallow water equation} admits a constant solution of the form $\Bar{\upeta}=H$ and $\Bar{\upupsilon}=\tp{\kappa H/\al}e_1$. Let us fix some choice $H\in\R^+$, as the nonpositive choices are not physically meaningful.
	
	We shall next perform a rescaling and perturbative reformulation in which $(\al,g,H)\mapsto(1,1,1)$. We set $\m{L}=H\sqrt{gH}/\al$ and $\m{T}=H/\al$ to be some characteristic length and time of the system. Using these we then define the following non-dimensional parameters: inverse Reynolds number $\mu=\upmu T/L^2$, inverse Bond number $\sig=\upsigma/gL^2$, and incline steepness (or inverse Froude number) $\kappa=\f{\upkappa H T}{\al L}$. We then define the non-dimensional velocity $\upupsilon:\R^+\times\R^2\to\R^2$ and  free surface $\upeta:\R^+\times\R^2\to\R$ via
	\begin{equation}
		\Bar{\upupsilon}(t,x)=\f{\m{L}}{\m{T}}\tp{\kappa e_1+\upupsilon(t/T,x/L)},\quad\Bar{\upeta}(t,x)=H(1+\upeta(t/T,x/L)).
	\end{equation}
	In terms of $\upupsilon$ and $\upeta$, system~\eqref{The time dependent inclined shallow water equation} takes the non-dimensional form
	\begin{equation}\label{non-dimensional time dependent equations}
		\begin{cases}
			(1+\upeta)\tp{\pd_t\upupsilon+(\upupsilon+\kappa e_1)\cdot\grad\upupsilon}+\upupsilon-\mu\grad\cdot((1+\upeta)\mathbb{S}\upupsilon)+(1+\upeta)\grad(1-\sig\Delta)\upeta-\kappa e_1\upeta=0,\\
			\pd_t\upeta+\kappa\pd_1\upeta+\grad\cdot((1+\upeta)\upupsilon)=0.
		\end{cases}
	\end{equation}
	Note that $(\upupsilon,\upeta)=(0,0)$ is a solution to these equations representing the previously mentioned trivial equilibrium. We are interested in nontrivial, traveling perturbations of this solution. We are thus lead to reformulate the equations yet again, but this time in a frame that is traveling in the $e_1$ direction (which is parallel to the tilt direction) at a signed non-dimensional speed $\Bar{\gam}\in\R$. The following traveling ansatz is made: there are $u:\R^2\to\R^2$ and $\eta:\R^2\to\R$ with the property that 
	\begin{equation}\label{traveling ansatz}
		\upupsilon(t,x)=u(x-t\Bar{\gam} e_1),\quad\upeta(t,x)=\eta(x-t\Bar{\gam} e_1).
	\end{equation}
	Let us define the parameter $\gam=\Bar{\gam}-\kappa\in\R$ to represent the relative non-dimensional wave speed. From~\eqref{non-dimensional time dependent equations} and~\eqref{traveling ansatz} we then derive the equations satisfied by $u$, $\eta$, $\gam$, and $\kappa$:
	\begin{equation}\label{traveling form of the equations}
		\begin{cases}
			(1+\eta)(u-\gam e_1)\cdot\grad u+u-\mu\grad\cdot((1+\eta)\mathbb{S}u)+(1+\eta)\grad(1-\sig\Delta)\eta-\kappa e_1\eta=0,\\
			-\gam\pd_1\eta+\grad\cdot((1+\eta)u)=0.
		\end{cases}
	\end{equation}
	This is the form of the tilted shallow water system which we shall analyze in this work. We view the above system as a sort of nonlinear `eigenvalue' problem: Thinking of $\mu,\sig>0$ as being fixed, we wish to find $(\gam,\kappa)\in\R^2$ and $(u,\eta)\neq 0$ such that~\eqref{traveling form of the equations} is satisfied.
	
	\begin{figure}[!h]
		\centering
		\scalebox{0.45}{\includegraphics{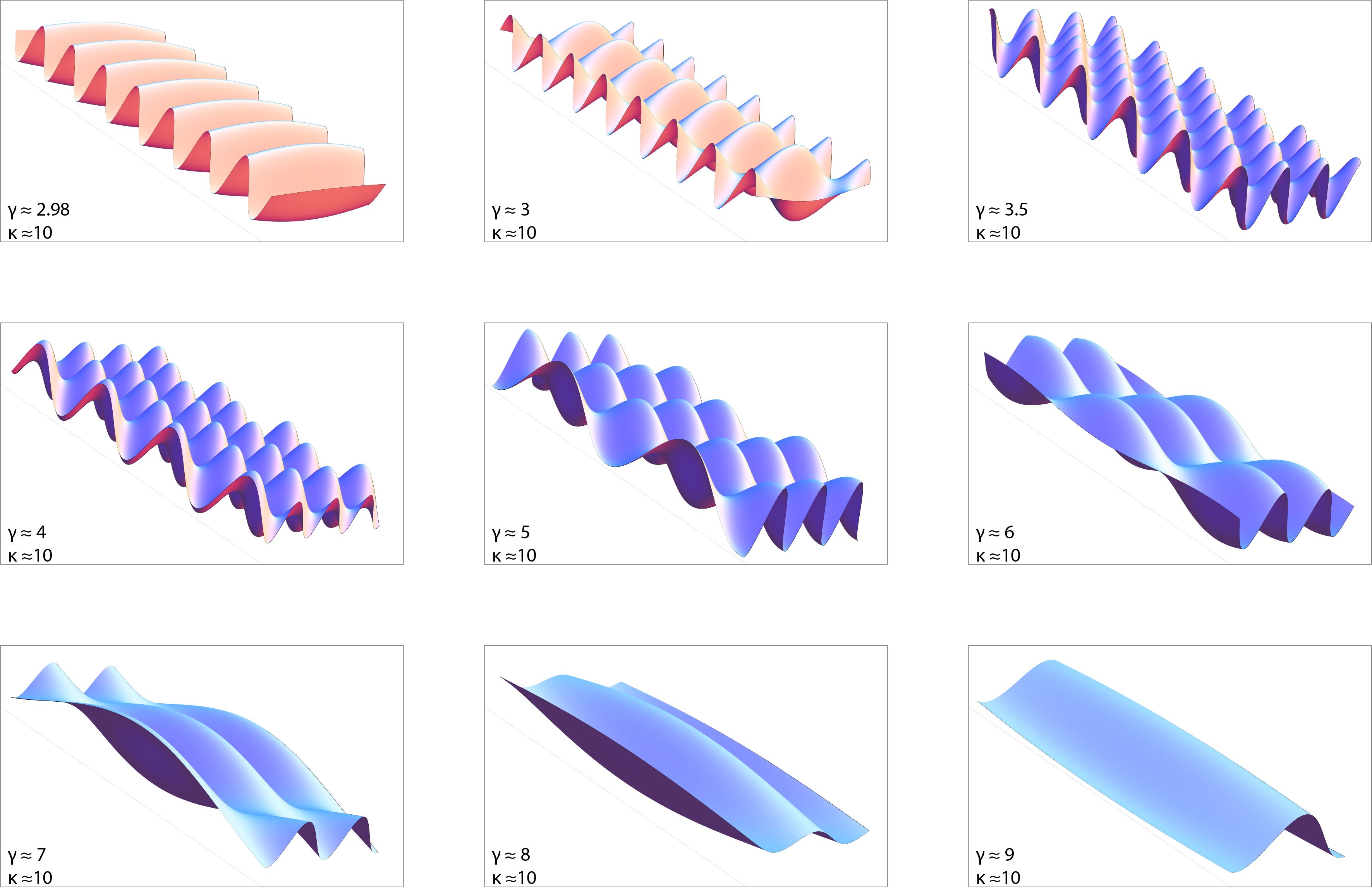}}
		\caption{Plotted above are the graphs (each having the same scale) of the free surface component within a selection of different roll wave solutions to system~\eqref{traveling form of the equations} (with $\mu=3/20$, $\sig=2$) covered by our main result, Theorem~\ref{1st main theorem}. This result applied in this situation is essentially telling us that if we fix (to leading order in the amplitude of the solution) the incline strength $\kappa=10$, then we can expect nontrivial traveling wave solutions to exist for relative wave speeds $\gam$ which are to leading order belonging to the range $\gam_{\m{min}}(10)\le
			\gam<10$ (see~\eqref{equivalent formulation for the set of interesting parameter values}). We choose 9 such values of $\gam$ and plot the corresponding example solutions. The periodicity lengths of these solutions are determined via the initial $(\gam,\kappa)$ though the function~\eqref{the period lengths}. Notice that the slowest waves are independent of the direction perpendicular to propagation while the faster waves have very long wavelength oscillations in the traveling direction.}
		\label{the roll waves plot}
	\end{figure}

	\subsection{Survey of previous work}\label{subsection on survey of previous work}
	
	We stress that there exists a plethora of versions of the shallow water equations, all of which are derived from the free boundary Euler or Navier-Stokes systems in one way or another; for various examples, we refer to~\cite{MR1324142,MR1821555,MR1975092,MR2118849,MR2281291,MR2397996,MR4105349}, and in particular the articles of Bresch~\cite{MR2562163} and Mascia~\cite{mascia_2010}.
	
	The roll wave phenomenon has been extensively studied in the mathematics and physics literature for over century and, as such, a complete survey is beyond the scope of this brief literature review. We shall content ourselves with focusing just on the results most closely related to our own. In all of the subsequent references, unless otherwise stated, the results concern a one dimensional shallow water model, either viscous or inviscid, neglecting capillary effects and with a Ch\'ezy-type drag term.
	
	Jeffreys~\cite{JEFF25} provided the first theoretical discussion of roll waves and analyzed the linear stability of inviscid flow over a flat plane. Dressler~\cite{MR0033717} established the existence of discontinuous roll wave solutions to the inviscid Saint-Venant equations. Brock~\cite{BROCK69} produced and empirically studied roll wave trains in laboratory flumes. Tougou~\cite{MR0570367} performs linear stability/instability analysis on the Dressler roll waves. Merkin and Needham~\cite{1984RSPSA.394..259N,MR0853213} add an energy dissipation term to the model used by Dressler and study the corresponding periodic roll wave phenomenon and stability questions. Hwang and Chang~\cite{MR0890282} discover a new family of roll wave solutions to the models of Dressler and Merkin-Needham. Kranenburg~\cite{MR1194982} numerically suggests instability for the viscous roll waves of Merkin and Needham under quasi-periodic perturbations and shows amplitude growth over time. Chang, Cheng, and Prokopiou~\cite{PCC91} derive a model which can include effects of surface tension and establish solitary roll waves. Kevorkian and Yu~\cite{MR1187443} examine, in a weakly unstable regime, weakly nonlinear small amplitude roll wave solutions to inviscid Saint-Venant equations and derive a model for the long time behavior. Mei and Ng~\cite{Ng_Mei_1994} develop a theory for one dimensional roll waves appearing in shallow layers of mud. Chang, Demekhinm and Kalaidin~\cite{MR1780437} study coherent structures and self-similarity in roll wave dynamics. Balmforth and Mandre~\cite{MR2259999} study the dynamics and stability of viscous roll waves and the interplay with varying the bottom topography. Nobel~\cite{MR2338351} establishes the existence of roll waves for a Saint-Venant equation with a periodically modulated bottom. A complete theory of linear and nonlinear stability of roll wave solutions to both the inviscid and viscous shallow water equations was developed by Johnson, Zumbrun, and Noble~\cite{MR2784868}, Barker, Johnson, Rodrigues, Zumbrun~\cite{MR2813829}, Barker, Johnson, Noble, Rodrigues, Zumbrun~\cite{MR3600832}, Johnson, Nobel, Rodrigues, and Zumbrun~\cite{MR3219516}, Rodrigues and Zumbrun~\cite{MR3449905}, and Johnson, Noble, Rodrigues, Yang, and Zumbrun~\cite{MR3933411}. For the state-of-the-art in the numerical and analytical theory of the two dimensional stability of inviscid roll waves, we refer the reader to the work of Yang and Zumbrun~\cite{yangZ2023}.
	
	Our results for the inclined shallow water system~\eqref{The time dependent inclined shallow water equation}, and its nondimensionalized traveling formulation~\eqref{traveling form of the equations}, are the first construction of properly two dimensional viscous roll wave solutions and show that a wide variety of new behavior is theoretically possible for thin fluids moving down an incline.
	
	We also make a connection to the general theory of traveling wave solutions to fluid equations. In the context of the free boundary Euler equations, this traveling wave study is also known as the water wave problem and a rich theory has been developing for over a century. For more information we refer the reader to the survey articles of Toland~\cite{Toland_1996}, Groves~\cite{Groves_2004}, Strauss~\cite{Strauss_2010}, and Haziot, Hur, Strauss, Toland, Wahl\'en, Walsh, and Wheeler~\cite{MR4406719}. In contrast, progress on the traveling wave problem for dissipative fluid models in dimensions at least 2 has only recently been made. Traveling waves generated by forcing have been studied as solutions to: the free boundary incompressible Navier-Stokes equations by Leoni and Tice~\cite{leoni2019traveling}, Stevenson and Tice~\cite{MR4337506,stevensontice2023wellposedness}, and Koganemaru and Tice~\cite{koganemaru2022traveling,koganemaru2023travelingwavesolutionsfree}; to the free boundary compressible Navier-Stokes equations by Stevenson and Tice~\cite{stevenson2023wellposedness}; to Darcy flow by Nguyen and Tice~\cite{nguyen_tice_2022}, Nguyen~\cite{nguyen2023largetravelingcapillarygravitywaves}, and Brownfield and Nguyen~\cite{brownfield2023slowlytravelinggravitywaves}; and to the damped shallow water equations by Stevenson and Tice~\cite{stevenson2023shallow}. Among this family of dissipative traveling wave problems is the common theme that nontrivial solutions have only been produced when a force in addition to gravity is supplied to the system. Therefore the result of this paper is the first example which does not follow this pattern - as the nontrivial roll wave solutions which are generated here exist in the presence of only a gravitational force.
	
	\subsection{Main results and discussion}\label{subsection on main results and discussion}
	
	The statements of this paper's main results require the introduction of some notation and helper functions. We shall work in classes of spatially periodic functions, viewing them as defined on flat 2-tori of various side lengths. If $\ell\in\R^+$ then the 1-torus of size $\ell$ is the set $\ell\T=\R/(\ell\Z)$. Now if $L=(L_1,L_2)\in\tp{\R^+}^2$, then the $2$-torus of side lengths $L_1$ and $L_2$ is the set $\T^2_L=\tp{L_1\T}\times\tp{L_2\T}$.
	
	The function spaces for our velocity and free surface functions are built from the following subspaces of Sobolev spaces which satisfy certain symmetry conditions. Firstly, we denote the vanishing average subspace via
	\begin{equation}
		{H^s_\bullet}(\T^2_L)=\tcb{f\in H^s(\T^2_L)\;:\;\mathscr{F}[f](0,0)=0}.
	\end{equation}
	Next we denote the subspaces of functions who are even or odd in the second variable:
	\begin{equation}
		H^s_{\pm}(\T^2_L)=\tcb{f\in H^s(\T^2_L)\;:\;f(x_1,x_2)=\pm f(x_1,-x_2),\;\forall\;x\in\T^2_L}.
	\end{equation}
	The spaces of functions for the velocity and free surface are denoted by
	\begin{equation}\label{the domain spaces}
		\bf{X}^{\m{velo}}_L=H^2_+(\T^2_L)\times H^2_-(\T^2_L),\quad\bf{X}^{\m{surf}}_L=(H^3_{\bullet}\cap H^3_{+})(\T^2_L),\quad \bf{X}_{L}=\bf{X}^{\m{velo}}_L\times\bf{X}^{\m{surf}}_L
	\end{equation}
	In other words we are restricting our attention to velocity fields which are second-argument even in the first component and second-argument odd in the second component along with free surface functions that are average zero and second argument even.
	
	The following (nonempty) set of non-dimensional relative wave speed and incline strength tuples is related to the classification of when the linearized problem associated with~\eqref{traveling form of the equations} has a kernel in periodic function spaces for some choice of period lengths:
	\begin{equation}\label{the set of interesting parameter values}
		\mathfrak{E}=\bcb{(\gam,\kappa)\in(1,\infty)^2\;:\;0<\kappa-\gam\le\f{4\mu}{\sig}\gam\tp{\gam^2-1}}.
	\end{equation}
	On the set $\mathfrak{E}$ we shall define numerous auxiliary functions: $\rho:\mathfrak{E}\to\R^+$, $\chi:\mathfrak{E}\to(0,1]$, and $\mathfrak{l}:\mathfrak{E}\to\tp{\R^+}^2$:
	\begin{equation}\label{the helper functions}
		\rho(\gam,\kappa)=\sqrt{\f{\kappa/\gam-1}{16\pi^2\mu}},\quad\chi(\gam,\kappa)=\f{1}{\gam}\sqrt{1+\f{\sig}{4\mu}\bp{\f{\kappa}{\gam}-1}},
	\end{equation}
	and
	\begin{equation}\label{the period lengths}
		\mathfrak{l}(\gam,\kappa)=\f{1}{\rho(\gam,\kappa)}\begin{cases}
			(1,1)&\text{if }\kappa-\gam=\f{4\mu}{\sig}\gam\tp{\gam^2-1},\\
			(1/\chi(\gam,\kappa),1/\sqrt{1-\chi(\gam,\kappa)^2})&\text{if }\kappa-\gam<\f{4\mu}{\sig}\gam\tp{\gam^2-1}.
		\end{cases}
	\end{equation}
	
	\begin{figure}[!h]
		\centering
		\scalebox{0.38}{\includegraphics{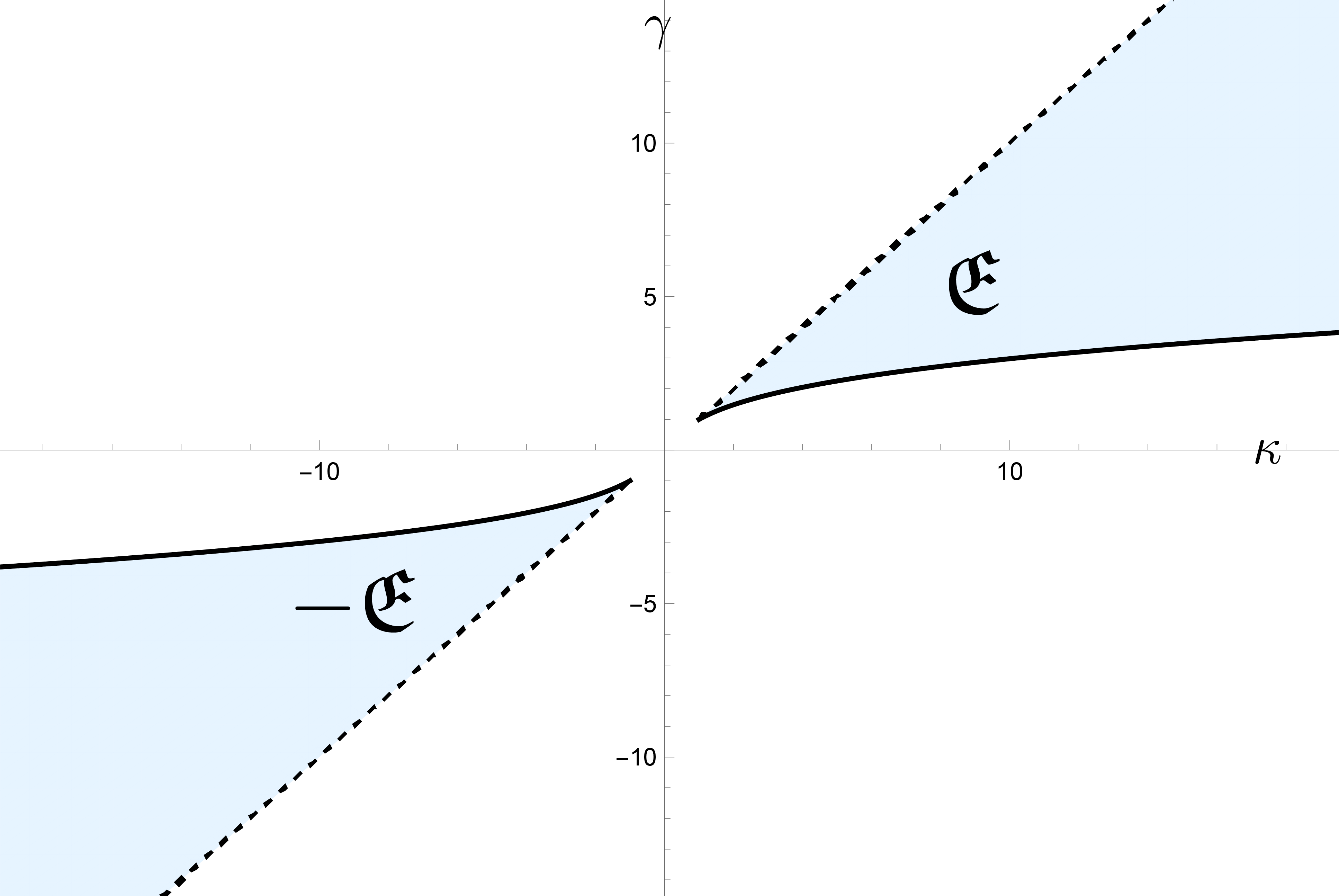}}
		\caption{Depicted here is a plot of $\mathfrak{E}\cup\tp{-\mathfrak{E}}$ for $\mu=3/20$, $\sig=2$, with wave speeds $\gam$ plotted on the vertical axis and tilt parameters $\kappa$ plotted on the horizontal axis. Theorems~\ref{1st main theorem} and~\ref{3rd main theorem} say roughly that small roll wave solutions exist for parameter combinations in the blue region and do not exist for combinations in the complement.}
		\label{the region of interesting combinations}
	\end{figure}
	
	Now we are ready to introduce our main results, whose proofs can be found in Section~\ref{subsection on conclusions}. We reemphasize that the inverse Reynolds number $\mu>0$ and the inverse Bond number $\sig>0$ are fixed throughout and all open sets and constants implicitly depend on these values.
	
	First we establish the existence of small roll wave solutions to the inclined shallow water equations with viscosity and surface tension for certain values of wave speed and incline. In other words, we show that nearby certain trivial solutions to~\eqref{traveling form of the equations} there exists a family of small nontrivial solutions to the equations provided that we vary the speed and tilt parameters. These roll wave families are semi-explicitly parametrized by the kernel of the linearized problem at the trivial solution and actually describe the totality of nearby small roll waves.
	
	\begin{customthm}{1}[Existence and multiplicity of small amplitude roll waves]\label{1st main theorem}
		Assume that $(\gam_\star,\kappa_\star)\in\mathfrak{E}\cup\tp{-\mathfrak{E}}$ and let $L=\mathfrak{l}(|\gam_\star|,|\kappa_\star|)\in\tp{\R^+}^2$ where $\mathfrak{l}$ is defined in~\eqref{the period lengths}. Then there exists a direct sum decomposition $\bf{X}_L=E\oplus Z$ with $\m{dim} E=2$, positive numbers $\ep,\bar{r}\in\R^+$, and an open set $(\gam_\star,\kappa_\star,0,0)\in\mathcal{O}\subset\R^2\times\bf{X}_L$ such that upon setting $K=E\cap\pd B_{\bf{X}_L}(0,1)$ the following hold.
		\begin{enumerate}
			\item There exists smooth functions $\tp{\tilde{u},\tilde{\eta}}:[-\ep,\ep]\times K\to \tp{B_{\bf{X}_L}(0,\bar{r})\cap Z}$ and $\tp{\tilde{\gam},\tilde{\kappa}}:[-\ep,\ep]\times K\to B_{\R^2}(0,\Bar{r})$ such that for all $(\al,\upupsilon,\upeta)\in[-\ep,\ep]\times K$ we have that upon defining
			\begin{equation}\label{nontrivial solutions}
				\begin{cases}
					(u,\eta)=\al\tp{\upupsilon,\upeta}+\al^2\tp{\tilde{u}\tp{\al,\upupsilon,\upeta},\tilde{\eta}\tp{\al,\upupsilon,\upeta}},\\
					(\gam,\kappa)=\tp{\gam_\star,\kappa_\star}+\al\tp{\tilde{\gam}\tp{\al,\upupsilon,\upeta},\tilde{\kappa}\tp{\al,\upupsilon,\upeta}},
				\end{cases}
			\end{equation}
			the velocity and free surface tuple $(u,\eta)$ is a solution to system~\eqref{traveling form of the equations} with speed and tilt parameters $(\gam,\kappa)$; moreover $\eta\neq0$ whenever $\al\neq0$.
			\item The only other local solutions are the trivial ones, more precisely:
			\begin{multline}
				\tcb{\tp{\bar{\gam},\bar{\kappa},\bar{u},\bar{\eta}}\in\mathcal{O}\;:\;\tp{\bar{\gam},\bar{\kappa},\bar{u},\bar{\eta}}\text{ is a solution to~\eqref{traveling form of the equations}}}\\=\tsb{\tcb{(\gam,\kappa,0,0)\;:\;(\gam,\kappa)\in\R^2}\cup\tcb{(\gam,\kappa,u,\eta)\;:\;\text{defined via }~\eqref{nontrivial solutions}}}\cap\mathcal{O}.
			\end{multline}
		\end{enumerate}
	\end{customthm}
	
	The next result is a simple corollary of Theorem~\ref{1st main theorem}; we read off to top order the shapes of the nontrivial free surface function and velocity components of the roll wave solutions. This theorem along with the previous one justifies the free surface functions depicted in Figure~\ref{the roll waves plot} and shows that the roll wave solutions we produce here are properly two dimensional.
	
	\begin{customthm}{2}[On the shape of roll waves]\label{2nd main thm}
		Under the hypotheses of Theorem~\ref{1st main theorem} consider the curve of solutions $\tcb{(\gam_\al,\kappa_\al,u_\al,\eta_\al)}_{\al\in[-\ep,\ep]}\in\bf{X}_L$ produced by the maps in~\eqref{nontrivial solutions} by varying $\al\in[-\ep,\ep]$ for some fixed $\tp{\upupsilon,\upeta}\in K$. There exists constants $A,B\in\R$ (not both zero) and $c\in\R^+$ such that
		\begin{enumerate}
			\item $\upeta$ has one of the formulae:
			\begin{enumerate}
				\item if $0<|\kappa_\star|-|\gamma_\star|=\f{4\mu}{\sig}|\gam_\star|\tp{|\gam_\star|^2-1}$, then
				\begin{equation}\label{border casez}
					\upeta(x_1,x_2)=A\cos(2\pi x_1/L_1)+B\sin(2\pi x_1/L_1),
				\end{equation}
				\item if $0<|\kappa_\star|-|\gamma_\star|<\f{4\mu}{\sig}|\gam_\star|\tp{|\gam_\star|^2-1}$
				\begin{equation}\label{interior casez}
					\upeta(x_1,x_2)=A\cos(2\pi x_1/L_1)\cos(2\pi x_2/L_2)+B\sin(2\pi x_1/L_1)\cos(2\pi x_2/L_2);
				\end{equation}
			\end{enumerate}
			\item $\upupsilon=(\upupsilon_1,\upupsilon_2)$ is determined via $\upeta$ through the formulae
			\begin{equation}\label{component 1}
				\upupsilon_1=-\kappa_\star\tp{1-\gam\pd_1-\mu\Delta}^{-1}\mathcal{R}_2^2\upeta-\gam_\star\mathcal{R}_1^2\upeta
			\end{equation}
			and
			\begin{equation}\label{component 2}
				\upupsilon_2=\kappa_\star\tp{1-\gam\pd_1-\mu\Delta}^{-1}\mathcal{R}_{1}\mathcal{R}_2\upeta-\gam_\star\mathcal{R}_1\mathcal{R}_2\upeta,
			\end{equation}
			where $\mathcal{R}_1$ and $\mathcal{R}_2$ are the Riesz transforms in the $e_1$ and $e_2$ directions, respectively;
			\item and we have for all for all $\al\in[-\ep,\ep]$:
			\begin{equation}\label{deviation from first order approximation}
				\tnorm{\eta_\al-\al\upeta}_{L^\infty}+\tnorm{u_\al-\al\upupsilon}_{L^\infty}\le c\al^2.
			\end{equation}
		\end{enumerate}
	\end{customthm}
	
	In our final result we consider the speed and tilt parameters lying in the complement of the region considered in Theorem~\ref{1st main theorem} and prove that in this case small nontrivial roll waves do not exist, even within the larger class of functions without second variable parity assumptions. Figure~\ref{the region of interesting combinations} allows one to visualize the parameters in tilt-speed space from which it is possible for small nontrivial roll wave solution families to emanate.
	
	\begin{customthm}{3}[Nonexistence of roll waves]\label{3rd main theorem}
		Suppose that $(\gam_\star,\kappa_\star)\in\R^2\setminus\tp{\mathfrak{E}\cup\tp{-\mathfrak{E}}}$ and let $L\in\tp{\R^+}^2$ be arbitrary. There exists an $\ep\in\R^+$, depending on $\mu$ and $\sig$ in addition to the aforementioned parameters, such that if
		\begin{equation}
			(\gam,\kappa,u,\eta)\in\R^2\times H^2(\T^2_L;\R^2)\times H^3_\bullet(\T^2_L)\text{ satisfies }|\gam-\gam_\star|+|\kappa-\kappa_\star|+\tnorm{u}_{H^2}+\tnorm{\eta}_{H^3}\le\ep
		\end{equation}
		and is a solution to system~\eqref{traveling form of the equations}, then $u=0$ and $\eta=0$.
	\end{customthm}
	
	Now that the main results of this paper have been presented, we shall discuss a few remarks. A high level summary of our results is as follows. For Theorems~\ref{1st main theorem} and~\ref{2nd main thm}, we work in a class of functions obeying the following symmetry conditions in the second spatial variable: the free surface and the first component of the velocity are even while the second component of the velocity is odd; on the other hand, for Theorem~\ref{3rd main theorem} no parity assumptions are necessary. We classify for which initializations $(\gam_\star,\kappa_\star,0,0)$ there exists or does not exist period lengths $(L_1,L_2)$ for the spatial variables and arbitrarily close periodic and nontrivial tuples $(\gam_\star,\kappa_\star,0,0)+\al(\tilde{\gam}_\al,\tilde{\kappa}_\al,\Bar{u}_\al,\Bar{\eta}_\al)$ solving system~\eqref{traveling form of the equations} for a sequence $\al\to0$.
	
	It is proved that if $(\gam_\star,\kappa_\star)$ belong to the region $\mathfrak{E}\cup\tp{-\mathfrak{E}}$ (see~\eqref{the set of interesting parameter values} and Figure~\ref{the region of interesting combinations}), then there exists such curves of nontrivial solutions with terminus $(\gam_\star,\kappa_\star,0,0)$, if one selects the period lengths $(L_1,L_2)$ to depend on the initial wave speed and tilt through the function~\eqref{the period lengths}. Moreover, the totality of small amplitude solutions to~\eqref{traveling form of the equations} nearby the trivial one $(\gam_\star,\kappa_\star,0,0)$, within the class obeying the aforementioned second variable symmetry assumptions and prescribed periodicity, is described. This is the content of Theorem~\ref{1st main theorem}.
	
	The set of solutions $(\gam,\kappa,u,\eta)$ generated with terminus $(\gam_\star,\kappa_\star)\in\mathfrak{E}$ are in a simple reflective correspondence with the set of solutions $(\hat{\gamma},\hat{\kappa},\hat{u},\hat{\eta})$ generated with the negative terminus $(-\gam_\star,-\kappa_\star)\in-\mathfrak{E}$, specifically the mapping $(-\hat{\gam},-\hat{\kappa},R\hat{u}\circ R,\hat{\eta}\circ R)=(\gam,\kappa,u,\eta)$, with $R(y_1,y_2)=(-y_1,y_2)$ being the reflection operator, gives a bijective correspondence between these two sets of solutions (see equations~\eqref{reflection equation 1} and~\eqref{reflection equation 2} for more details.). Note that appeals to one's physical intuition as one expects the collection of leftward traveling roll waves to be in symmetric correspondence with the collection of rightward traveling ones.
	
	One could equivalently formulate the set $\mathfrak{E}$ of definition~\eqref{the set of interesting parameter values} in terms of the minimal traveling speed for the linearized problem at a fixed tilt strength. Specifically, we can let $\gam_{\m{min}}(\kappa)\in(1,\kappa)$ be the unique number satisfying the equation $\tp{\gam_{\m{min}}(\kappa)}^2=1+\f{\sig}{4\mu}\sp{\f{\kappa}{\gam_{\m{min}}(\kappa)}-1}$ and deduce that
	\begin{equation}\label{equivalent formulation for the set of interesting parameter values}
		\mathfrak{E}=\tcb{(\gam,\kappa)\in(1,\infty)^2\;:\;\gam_{\m{min}}(\kappa)\le\gam<\kappa}.
	\end{equation}
	We now see that it is suggested by Theorem~\ref{2nd main thm} that the `slowest' wave families - i.e. the one's generated with a terminus $(\gam_\star,\kappa_\star)\in\mathfrak{E}\cup\tp{-\mathfrak{E}}$ satisfying $|\gam_\star|=\gam_{\m{min}}(|\kappa_\star|)$, which are covered by the first case (i.e.~\eqref{border casez}) in the theorem - are one dimensional in the sense that they do not depend on the second variable. This is indeed the case since the kernel of the linearized problem in this case consists of functions of just the first variable; so a minor modification of the proof of Theorem 1 will show that the families of solutions generated in this case are one dimensional functions. We do not pursue the full justification of this fact, however, as our primary interest is in the generation of properly two dimensional roll waves. Theorem~\ref{2nd main thm} also tells us, specifically in the second conclusion (i.e.~\eqref{interior casez}), that all families of roll waves generated by a terminus $(\gam_\star,\kappa_\star)\in\mathfrak{E}\cup\tp{-\mathfrak{E}}$ with an initial wave speed exceeding the minimal value, i.e. $|\gam_{\star}|>\gam_{\m{min}}(|\kappa_\star|)$, are necessarily properly two dimensional.
 
 On the other hand, one observes that each component of $\mathfrak{l}(|\gam_\star|,|\kappa_\star|)$ (the period lengths function from~\eqref{the period lengths}) tends to $\infty$ as $|\gam_\star|\to|\kappa_\star|$. So the `faster' roll waves produced by Theorem~\ref{1st main theorem} are extremely slowly varying in space.
	
	We also remark that while the roll wave solutions produced by Theorem~\ref{1st main theorem} live initially in the regularity classes $(u,\eta)\in H^2(\T^2_L;\R^2)\times H^3(\T^2_L)$, a straightforward a posteriori regularity promotion argument (omitted here for brevity) will show that these solutions are, in fact, smooth.
	
	Theorem~\ref{3rd main theorem} considers what happens in the opposite case of when $(\gam,\kappa)$ do not belong to the region $\mathfrak{E}\cup\tp{-\mathfrak{E}}$. Here it is simply established that the trivial solution is locally unique (no matter the choice of the spatial period lengths) and so there does not exist arbitrarily small amplitude roll waves in our functional framework for these combinations of wave speed and tilt. In particular, this rules out two type of solutions: there does not exist arbitrarily small roll waves for small tilt ($|\kappa|<1$) and there does not exist arbitrarily small roll waves which `travel uphill'. Indeed, a positive $\kappa$ indicates tilting down to the right while negative $\kappa$ corresponds to tilting down to the left. Recall that $\Bar{\gam}=\gam+\kappa$, defined in~\eqref{traveling ansatz}, is the dimensionless speed of the traveling frame, with $\Bar{\gam}>0$ indicating a rightward traveling wave and $\Bar{\gam}<0$ indicating a leftward traveling wave. So if $\Bar{\gam}$ and $\kappa$ have opposite signs (which happens in either traveling uphill case), then $(\gam,\kappa)$ necessarily belongs to the complement of $\mathfrak{E}\cup\tp{-\mathfrak{E}}$ and hence there are not arbitrarily small roll waves in this case.
	
	This construction of the first properly two dimensional roll wave solutions to~\eqref{traveling form of the equations} opens the door to several lines of further inquiry which are delayed for future work. For instance, one can ask: For which values of tilt strength $\kappa$ is the equilibrium solution stable or unstable? Can the curves of small solutions be extended to large amplitudes? Which other fluid models admit similar families of nontrivial solutions?
	
	To close this subsection, we shall briefly discuss the proof strategy for our main theorems and outline the paper. Theorem~\ref{3rd main theorem} is established via a simple inverse function theorem argument: the invertibility of the linearization is proved via explicitly inverting the symbol of the linearized PDE, which is possible for speed and tilt parameters in $\R^2\setminus\tp{\mathfrak{E}\cup\tp{-\mathfrak{E}}}$. Theorem~\ref{2nd main thm} is actually a straightforward consequence of Theorem~\ref{1st main theorem} and its proof.
	
	Theorem~\ref{1st main theorem} is proved via a local bifurcation theory argument. Upon study of the linearized operator associated with~\eqref{traveling form of the equations}, which we call $P(\gam,\kappa,\cdot,\cdot):\bf{X}_L\to\bf{Y}_L$ in~\eqref{the family of linearized operators P}, one finds that for $(\gam,\kappa)\in\mathfrak{E}$ and $L=\mathfrak{l}(\gam,\kappa)$ (see~\eqref{the period lengths}) the kernel is a two dimensional subspace in $\bf{X}_L$. On the other hand $P(\gam,\kappa,\cdot,\cdot)$ is Fredholm of index zero and hence has a closed range in $\bf{Y}_L$ of codimension 2. It is therefore not clear how our functional framework would support a classical `simple eigenvalue'-type bifurcation argument as in Crandall and Rabinowitz~\cite{MR0288640}. Instead we look to a multiparameter bifurcation style argument to make up for the codimension 2 range of the linearization.
	
	The precise abstract bifurcation tool utilized in this work, which will not be a surprise to experts in the area, is recorded with proof, for clarity and convenience, in Section~\ref{subsection on abstract bifurcation}. A straightforward synthesis of the ideas in Theorem I.19.6 in Kielh\"ofer~\cite{MR2859263} and Lemma 1.12 in Crandall and Rabinowitz~\cite{MR0288640} yields a multiparameter bifurcation theorem which is capable of classifying all small solutions.
	
	In Section~\ref{section on linear analysis} we aim to verify the `linear hypotheses' of the bifurcation theorem. Section~\ref{subsection on analysis of the kernel} computes the kernel of $P(\gam,\kappa,\cdot,\cdot)$ in terms of the kernel of a related scalar partial differential operator. This latter operator can be readily analyzed with Fourier analysis and it is found that its kernel is exactly the subspace of functions frequency supported on a special set of modes. In Section~\ref{subsection on analysis of the range} we then turn our attention to the range of the operator $P(\gam,\kappa,\cdot,\cdot)$. We show that the variation of $P$ in the speed and tilt parameters acting on nontrivial members of the kernel makes up the missing two dimensions of the range; in other words a transversality condition is satisfied. This leads to the satisfaction of the remaining linear hypotheses of the bifurcation tool, Theorem~\ref{theorem on multiparameter bifurcation}. Section~\ref{subsection on synthesis of the linear analysis} is a synthesis of the linear analysis.
	
	Section~\ref{section on nonlinear analysis} first develops the abstract bifurcation tool, Theorem~\ref{theorem on multiparameter bifurcation},  with the content of Section~\ref{subsection on abstract bifurcation}. Next, in Section~\ref{subsection on concrete nonlinear analysis}, we perform some simple smoothness verification for the shallow water system's nonlinearities, thereby checking the nonlinear bifurcation hypotheses. Finally, in Section~\ref{subsection on conclusions}, we combine all of the previous material into proofs of the main results, Theorems~\ref{1st main theorem}, \ref{2nd main thm}, and~\ref{3rd main theorem}.

	\subsection{Conventions of notation and function spaces}\label{subsection on CoN & FS}
	
	$\N$ is the set $\tcb{0,1,2,\dots}$ while $\N^+=\N\setminus\tcb{0}$. We denote $\R^+=(0,\infty)$. The integers are denoted by $\Z=\tcb{\dots,-2,-1,0,1,2,\dots}$. The notation $A\lesssim B$ means that there exists a constant $C\in\R^+$, depending on the parameters which are clear from the context, such that $A\le CB$. We also express that two quantities $A$ and $B$ are equivalent, written $A\asymp B$, if $A\lesssim B$ and $B\lesssim A$. We shall use the bracket notation: $\tbr{\cdot}:\C^k\to\R^+$:
	\begin{equation}
		\tbr{x}=\sqrt{1+|x_1|^2+\cdots+|x_k|^2},\quad x=(x_1,\dots,x_k)\in\C^k.
	\end{equation}
	
	The Fourier transform of a function $f:\T^2_L\to\C^k$ is denoted by $\mathscr{F}[f]:\tp{\Z/L_1}\times\tp{\Z/L_2}\to\C^k$ and has the formula
	\begin{equation}
		\mathscr{F}[f](\xi_1,\xi_2)=\f{1}{\sqrt{L_1L_2}}\int_0^{L_1}\int_{0}^{L_2}f(x_1,x_2)e^{-2\pi\ii x_1\xi_1}e^{-2\pi\ii x_2\xi_2}\;\m{d}x_1\;\m{d}x_2
	\end{equation}
	while the corresponding inverse Fourier transform of a sequence $\varphi:(\Z/L_1)\times\tp{\Z/L_2}\to\C^k$ is the function $\mathscr{F}^{-1}[\varphi]:\T^2_L\to\C^k$ and is given via
	\begin{equation}
		\mathscr{F}^{-1}[\varphi](x_1,x_2)=\f{1}{\sqrt{L_1L_2}}\sum_{(\xi_1,\xi_2)\in\Z/L_1\times\Z/L_2}\varphi(\xi_1,\xi_2)e^{2\pi\ii x_1\xi_1}e^{2\pi\ii x_2\xi_2}.
	\end{equation}
	With these definitions the Fourier reconstruction formula
	\begin{equation}
		f(x_1,x_2)=\f{1}{\sqrt{L_1L_2}}\sum_{(\xi_1,\xi_2)\in\Z/L_1\times\Z/L_2}\mathscr{F}[f](\xi_1,\xi_2)e^{2\pi\ii x_1\xi_1}e^{2\pi\ii x_2\xi_2}
	\end{equation}
	holds.
	
	The vector of Riesz transforms, denoted by $\mathcal{R}$, is the Fourier multiplication operator with the symbol vanishing at the origin and satisfying $\xi\mapsto\ii|\xi|^{-1}\xi$ for $\xi\neq 0$. In other words $\mathcal{R}=|\grad|^{-1}\grad$. The Leray projection operator onto divergence free vector fields, denoted $\mathbb{P}$, is the Fourier multiplication operator with the symbol equal to the identity at the origin and obeying $\xi\mapsto I-\f{\xi\otimes\xi}{|\xi|^2}$ for $\xi\neq0$. Note that $\mathbb{P}=I+\mathcal{R}\otimes\mathcal{R}$.
	
	For $s\in\N$ we let $H^s(\T^2_L;\R^k)$ denote the standard $L^2$-based Sobolev space of $\R^k$-valued functions with $s$ (weak) derivatives in $L^2$. When $k=1$ we shall denote $H^s(\T^2_L;\R)=H^s(\T^2_L)$. As a norm on this space we take
	\begin{equation}
		\tnorm{f}_{H^s}^2=\sum_{\xi\in\Z/L_1\times\Z/L_2}\tbr{\xi}^{2s}|\mathscr{F}[f](\xi)|^2.
	\end{equation}

	The notation for our domain spaces is introduced in the beginning of Section~\ref{subsection on main results and discussion}. The notation for the codomain spaces is similar. We pose the momentum and continuity equations of~\eqref{traveling form of the equations} in the spaces
	\begin{equation}\label{the codomain spaces}
		\bf{Y}^{\m{mome}}_L=H^0_+(\T^2_L)\times H^0_{-}(\T^2_L),\quad\bf{Y}^{\m{cont}}_L=\tp{H^1_{\bullet}\cap H^1_+}(\T^2_L),\quad \bf{Y}_L=\bf{Y}^{\m{mome}}_L\times\bf{Y}^{\m{cont}}_L.
	\end{equation}
	
	Finally we shall introduce notation for larger container spaces for the domain $\bf{X}_L$ and the codomain $\bf{Y}_L$ which do not enforce the second argument parity assumptions. Let
	\begin{equation}\label{larger than life spaces}
		\Bar{\bf{X}}_L=H^2(\T^2_L;\R^2)\times H^3_\bullet(\T^2_L),\quad\Bar{\bf{Y}}_L= H^0(\T^2_L;\R^2)\times H^1_\bullet(\T^2_L).
	\end{equation}
	Observe that $\bf{X}_L\subset\Bar{\bf{X}}_L$ and $\bf{Y}_L\subset\Bar{\bf{Y}}_L$ are closed subspaces.
	
	\section{Linear Analysis}\label{section on linear analysis}
	
	The goal of this section is to study the linearized problem corresponding to system~\eqref{traveling form of the equations}. For $L=(L_1,L_2)\in\tp{\R^+}^2$ we consider the following family of linear maps
	\begin{equation}\label{the family of linearized operators P}
		P:\R^2\times\Bar{\bf{X}}_L\to\Bar{\bf{Y}}_L,\quad P(\gam,\kappa,u,\eta)=\bpm-\gam\pd_1 u+u-\mu\grad\cdot\mathbb{S}u+\grad(1-\sig\Delta)\eta-\kappa\eta e_1\\-\gam\pd_1\eta+\grad\cdot u\epm,
	\end{equation}
	where we recall that the function spaces $\Bar{\bf{X}}_L$ and $\Bar{\bf{Y}}_L$ are defined in~\eqref{larger than life spaces}. We note that the operator $P$ is parity preserving in the sense that for all $(\gam,\kappa,u,\eta)\in\R^2\times\bf{X}_L$ it holds that $P(\gam,\kappa,u,\eta)\in\bf{Y}_L$, where these spaces are defined in equations~\eqref{the domain spaces} and~\eqref{the codomain spaces}, respectively. So we may also view $P$ as a function $P:\R^2\times\bf{X}_L\to\bf{Y}_L$.
	
	\subsection{Analysis of the kernel}\label{subsection on analysis of the kernel}
	
	Here we explore the kernels of the operators $P$ from~\eqref{the family of linearized operators P} for certain values of $(\gam,\kappa)$ and $L$. Our first task is to reduce the computation of the kernel of $P$ to that of a related, but scalar, integro-differential operator $Q$, which we define next. By the map $Q:\R^2\times H^3_\bullet(\T^2_L)\to H^3_\bullet(\T^2_L)$ we shall mean
	\begin{equation}\label{the family of auxilary linearized operators Q}
		Q(\gam,\kappa,\eta)=\Delta^{-2}\tsb{\tp{\tp{\gam-\kappa}-4\mu\gam\Delta}\pd_1\eta+\tp{\Delta-\gam^2\pd_1^2-\sig\Delta^2}\eta},\quad(\gam,\kappa,\eta)\in\R^2\times H^3_\bullet(\T^2_L),
	\end{equation}
	One can easily check that the operators $Q$ are also parity preserving, as they map $Q:\R^2\times\bf{X}^{\m{surf}}_L\to\bf{X}^{\m{surf}}_L$, with $\bf{X}^{\m{surf}}_L$ defined in~\eqref{the domain spaces}.
	
	In this subsection and the next, unless otherwise stated, we are viewing $P(\gam,\kappa,\cdot,\cdot):\bf{X}_L\to\bf{Y}_L$ and $Q(\gam,\kappa,\cdot):\bf{X}_L^{\m{surf}}\to\bf{X}_L^{\m{surf}}$; e.g. when talking about linear algebraic constructions like the kernel and range the context is for these restricted operators which enforce the parity assumptions.
	\begin{prop}[Correspondence of kernels]\label{proposition on the reduction of kernels}
		For $L\in\tp{\R^+}^2$ and $(\gam,\kappa)\in\R^2$ the restriction of the linear map $R_{\gam,\kappa}:\bf{X}_L^{\m{surf}}\to\bf{X}_L$ given via
		\begin{equation}\label{the kernel isomorphism operator}
			R_{\gam,\kappa}\eta=\bpm\kappa(1-\gam\pd_1-\mu\Delta)^{-1}(I+\mathcal{R}\otimes\mathcal{R})\tp{\eta e_1}-\gam\mathcal{R}\otimes\mathcal{R}\tp{\eta e_1}\\\eta\epm
		\end{equation}
		to $\m{ker}Q(\gam,\kappa,\cdot)\subset\bf{X}_L^{\m{surf}}$ takes values in $\m{ker}P(\gam,\kappa,\cdot,\cdot)\subset\bf{X}_L$ and gives an isomorphism $\m{ker}Q(\gam,\kappa,\cdot)\to\m{ker}P(\gam,\kappa,\cdot,\cdot)$.
	\end{prop}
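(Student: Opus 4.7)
The plan is to use Helmholtz decomposition together with Fourier-analytic inversion of an elliptic scalar operator. The key algebraic identity is the shallow-water relation $\grad\cdot\mathbb{S}u = \Delta u + 3\grad(\grad\cdot u)$, which follows immediately from~\eqref{viscous stress tensor}; taking one further divergence yields $\grad\cdot(\grad\cdot\mathbb{S}u)=4\Delta(\grad\cdot u)$. I would first record that $R_{\gam,\kappa}:\bf{X}^{\m{surf}}_L\to\bf{X}_L$ is a well-defined bounded linear map: the symbol of the Fourier multiplier $1-\gam\pd_1-\mu\Delta$ has real part bounded below by $1$, so its inverse is a bounded multiplier on every $H^s(\T^2_L)$, while the Riesz transforms and $\mathbb{P}=I+\mathcal{R}\otimes\mathcal{R}$ are standard. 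The second-variable parities built into $\bf{X}_L$ are preserved because the relevant symbols are, respectively, even or odd in $\xi_2$ in exactly the pattern that sends an even $\eta$ to a first-component-even, second-component-odd velocity. Injectivity of $R_{\gam,\kappa}$ is immediate since its second component is $\eta$.

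For the surjection $R_{\gam,\kappa}|_{\ker Q}\to\ker P$, I would reconstruct $u$ from any solution $(u,\eta)\in\ker P$. The continuity equation pins down $\grad\cdot u = \gam\pd_1\eta$, which determines the gradient part of $u$ via the Helmholtz decomposition as $(I-\mathbb{P})u = -\grad\Delta^{-1}\grad\cdot u = -\gam\mathcal{R}\otimes\mathcal{R}(\eta e_1)$. Applying $\mathbb{P}$ to the momentum equation kills every pure gradient --- both $\grad(1-\sig\Delta)\eta$ directly and $3\mu\gam\grad\pd_1\eta$, which appears within $-\mu\grad\cdot\mathbb{S}u$ after substituting the value of $\grad\cdot u$ --- and what remains is $(1-\gam\pd_1-\mu\Delta)\mathbb{P}u = \kappa\mathbb{P}(\eta e_1)$, which inverts uniquely. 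Summing the gradient and divergence-free parts recovers $u$ as the first component of $R_{\gam,\kappa}\eta$. Taking instead the divergence of the momentum equation and invoking $\grad\cdot(\grad\cdot\mathbb{S}u)=4\gam\Delta\pd_1\eta$ reduces it to the scalar identity
\begin{equation*}
(\Delta-\gam^2\pd_1^2-\sig\Delta^2)\eta + \bp{(\gam-\kappa)-4\mu\gam\Delta}\pd_1\eta = 0,
\end{equation*}
which is exactly $\Delta^2 Q(\gam,\kappa,\eta)=0$; since $\Delta^2$ is injective on the zero-mean class $\bf{X}^{\m{surf}}_L$, we conclude $\eta\in\ker Q$.

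The reverse direction --- that $\eta\in\ker Q$ forces $R_{\gam,\kappa}\eta\in\ker P$ --- is obtained by running the same computations backwards. The continuity equation follows from the identities $\grad\cdot\mathbb{P}\equiv 0$ and $\grad\cdot(-\mathcal{R}\otimes\mathcal{R}(\eta e_1))=\pd_1\eta$, while for the momentum equation, applying $(1-\gam\pd_1-\mu\Delta)$ to the defining formula for $u$ and substituting $\grad\cdot u=\gam\pd_1\eta$ collapses everything into a pure gradient that, after one multiplication by $\Delta$, is precisely the displayed scalar above and therefore vanishes by hypothesis. The main obstacle is the algebraic bookkeeping around this reduction, and in particular checking that each passage between $P=0$ and $Q=0$ through the operators $\Delta$, $\Delta^{-1}$, and $\mathbb{P}$ is bijective; this is the reason the definitions of $\bf{X}^{\m{surf}}_L$ and $\bf{Y}^{\m{cont}}_L$ enforce the vanishing-mean condition $H^s_\bullet$.
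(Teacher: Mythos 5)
Your proposal is correct and follows essentially the same route as the paper: injectivity of $R_{\gam,\kappa}$ from its second component; the Helmholtz/Leray split with the continuity equation pinning the gradient part and $\mathbb{P}$ applied to the momentum equation pinning the solenoidal part; and the divergence of the momentum equation yielding the scalar $Q$-equation via $\grad\cdot(\grad\cdot\mathbb{S}u)=4\Delta(\grad\cdot u)$. One cosmetic slip: in fact $(I-\mathbb{P})=\grad\Delta^{-1}\grad\cdot$ rather than its negative, but since your next equality also drops a sign the two errors cancel and the concluding identity $(I-\mathbb{P})u=-\gam\mathcal{R}\otimes\mathcal{R}(\eta e_1)$ is correct.
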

	\begin{proof}
		The mapping $R_{\gam,\kappa}$ of~\eqref{the kernel isomorphism operator} is evidently continuous and an injection, which means we need only to check that its restriction to $\m{ker}Q(\gam,\kappa,\cdot)$ has image $\m{ker}P(\gam,\kappa,\cdot,\cdot)$.
		
		Suppose first that $(u,\eta)\in\m{ker}P(\gam,\kappa,\cdot,\cdot)\subset\bf{X}_L$. The equations in the first and second components of $P$ in~\eqref{the family of linearized operators P} are then zero. We decompose the vector field $u=v+w$, $v=(I-\mathbb{P})u$ and $w=\mathbb{P}u$ into its potential and solenoidal parts. The second equation in $P$ reveals to us that $\gam\pd_1\eta=\grad\cdot u$ and hence $v=\gam(I-\mathbb{P})\tp{\eta e_1}=-\gam\mathcal{R}\otimes\mathcal{R}\tp{\eta e_1}$.
		
		Upon returning to the first component of $P$ and applying $\mathbb{P}$, we derive that $(1-\gam\pd_1-\mu\Delta)w=\kappa\mathbb{P}\tp{\eta e_1}$ and hence $w=\kappa(1-\gam\pd_1\Delta)^{-1}\tp{I+\mathcal{R}\otimes\mathcal{R}}\tp{\eta e_1}$. After recalling that $u=v+w$ we see that at this point we have established that $R_{\gam,\kappa}\eta=(u,\eta)$.
		
		The goal now is to establish that $\eta\in\m{ker}Q(\gam,\kappa,\cdot)\subset\bf{X}_L^{\m{surf}}$. We apply the divergence to the first component equation of $P$ and substitute that $\grad\cdot u=\gam\pd_1\eta$ to derive:
		\begin{equation}\label{what it takes to see}
			\tp{\tp{\gam-\kappa}-4\gam\mu\Delta}\pd_1\eta+\tp{\Delta-\gam^2\pd_1^2-\sig\Delta^2}\eta=0.
		\end{equation}
		Since $\eta$ has vanishing average, we can apply $\Delta^{-2}$ to~\eqref{what it takes to see} and obtain that $\eta\in\m{ker}Q(\gam,\kappa,\cdot)$.
		
		To complete the proof, we shall argue now that if $\eta\in\m{ker}Q(\gam,\kappa,\cdot)$, then $(u,\eta)=R_{\gam,\kappa}\eta\in\m{ker}P(\gam,\kappa,\cdot)$. From the definition of $R_{\gam,\kappa}$, it is clear that $\grad\cdot u=\gam\pd_1\eta$ so that the second component of $P(\gam,\kappa,u,\eta)$ vanishes. In order to show that the first component of $P(\gam,\kappa,\cdot,\cdot)$ also vanishes, we can decompose it into potential and solenoidal parts with the Leray projector. As we calculated in the previous step, the solenoidal part is the expression: $(1-\gam\pd_1-\mu\Delta)\mathbb{P}u-\kappa\mathbb{P}\tp{\eta e_1}$. The definition $(u,\eta)=R_{\gam,\kappa}\eta$ ensures that this vanishes. The potential part is the expression: $\Delta^{-1}\grad\tsb{\tp{\tp{\gam-\kappa}-4\mu\gam\Delta}\pd_1\eta+\tp{\Delta-\gam^2\pd_1^2-\sig\Delta^2}\eta}=\Delta\grad Q(\gam,\kappa,\eta)$ which also vanishes.
	\end{proof}
	
	Our next goal is to compute the kernel of the operator $Q$. Thanks to the previous result, once this is complete, we obtain the kernel of $P$ via application of the map $R_{\gam,\kappa}$. In the next result we recall the notation introduced in equations~\eqref{the set of interesting parameter values}, \eqref{the helper functions}, and~\eqref{the period lengths}. We also consider the set valued function
	\begin{equation}\label{the frequencies of the kernel}
		V(\gam,\kappa)=\tcb{\rho(\gam,\kappa)\tp{\sig_1\chi(\gam,\kappa),\sig_2\sqrt{1-\chi(\gam,\kappa)^2}}\;:\;\sig_1,\sig_2\in\tcb{-1,1}}.
	\end{equation}
	Notice that $V(\gam,\kappa)$ is a subset of the circle of radius $\rho(\gam,\kappa)$ that has 2 points in the case that $\kappa-\gam=\f{4\mu}{\sig}\gam\tp{\gam^2-1}$ and has 4 points in the opposite case of $\kappa-\gam<\f{4\mu}{\sig}\gam\tp{\gam^2-1}$. Note that the function $\mathfrak{l}$ has the property that $V(\gam,\kappa)\subset\tp{\Z/L_1\times \Z/L_2}\setminus\tcb{0}$ with $(L_1,L_2)=\mathfrak{l}(\gam,\kappa)$.

	\begin{prop}[Computation of the reduced kernel]\label{proposition on computation of the reduced kernel}
		Suppose that $(\gam,\kappa)\in\mathfrak{E}$ and that $L=\mathfrak{l}(\gam,\kappa)\in\tp{\R^+}^2$. The following hold.
		\begin{enumerate}
			\item A function $\eta\in\bf{X}^{\m{surf}}_L$ belongs to $\m{ker}Q(\gam,\kappa,\cdot)$ if and only if $\mathscr{F}[\eta]$ is supported on the set $V(\gam,\kappa)\subset\tp{\Z/L_1\times\Z/L_2}\setminus\tcb{0}$.
			\item The subspace $\m{ker}Q(\gam,\kappa,\cdot)\subset\bf{X}^{\m{surf}}_L$ is two dimensional and is spanned by the smooth functions $\upvarphi_+$ and $\upvarphi_-$ whose formulae are given by:
			\begin{enumerate}
				\item In the case that $0<\kappa-\gam=\f{4\mu}{\sig}\gam\tp{\gam^2-1}$
				\begin{equation}\label{border case}
					\upvarphi_\iota(x)=\sqrt{\f{2}{L_1L_2}}\begin{cases}
						\cos(2\pi x_1/L_1)&\text{if }\iota=+,\\
						\sin(2\pi x_1/L_1)&\text{if }\iota=-.
					\end{cases}
				\end{equation}
				\item In the case that $0<\kappa-\gam<\f{4\mu}{\sig}\gam\tp{\gam^2-1}$
				\begin{equation}\label{interior case}
					\upvarphi_\iota(x)=\sqrt{\f{4}{L_1L_2}}
					\begin{cases}
						\cos(2\pi x_1/L_1)\cos(2\pi x_2/L_2)&\text{if }\iota=+,\\
						\sin(2\pi x_1/L_1)\cos(2\pi x_2/L_2)&\text{if }\iota=-.
					\end{cases}
				\end{equation}
			\end{enumerate}
		\end{enumerate}
	\end{prop}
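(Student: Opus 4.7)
I would handle Proposition 2.2 by translating the kernel equation into a Fourier-side condition and then carefully accounting for the parity and reality constraints built into $\bf{X}^{\m{surf}}_L$.

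First, observe that because $\Delta^{-2}$ is injective on zero-average functions, $\eta\in\m{ker}Q(\gam,\kappa,\cdot)$ if and only if the interior differential polynomial from~\eqref{the family of auxilary linearized operators Q} vanishes:
\begin{equation*}
\tp{\tp{\gam-\kappa}-4\mu\gam\Delta}\pd_1\eta+\tp{\Delta-\gam^2\pd_1^2-\sig\Delta^2}\eta=0.
\end{equation*}
Apply Fourier transform: the above equation is equivalent to $m(\xi)\mathscr{F}[\eta](\xi)=0$ for every $\xi\in(\Z/L_1)\times(\Z/L_2)$, where
\begin{equation*}
m(\xi)=2\pi\ii\xi_1\sb{(\gam-\kappa)+16\pi^2\mu\gam|\xi|^2}+\sb{-4\pi^2|\xi|^2+4\pi^2\gam^2\xi_1^2-16\pi^4\sig|\xi|^4}.
\end{equation*}
Since the coefficients of the PDE are real, this symbol splits into independent real and imaginary parts that must separately annihilate $\mathscr{F}[\eta](\xi)$.

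Next, I would classify the zero set of $m$. If $\xi_1=0$ and $\xi\neq 0$, the real part becomes $-4\pi^2|\xi|^2-16\pi^4\sig|\xi|^4<0$, ruling out such modes (consistent with the average-zero constraint killing $\xi=0$). So $\xi_1\neq 0$, and the imaginary part forces $(\gam-\kappa)+16\pi^2\mu\gam|\xi|^2=0$, i.e.\ $|\xi|^2=\rho(\gam,\kappa)^2$; note this is solvable precisely because $\kappa>\gam>0$ on $\mathfrak{E}$. Substituting back into the real part gives $\gam^2\xi_1^2=|\xi|^2\sp{1+4\pi^2\sig|\xi|^2}=\rho^2\sp{1+\f{\sig}{4\mu}(\kappa/\gam-1)}$, whence $\xi_1^2=\rho^2\chi(\gam,\kappa)^2$ and $\xi_2^2=\rho^2(1-\chi^2)$. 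The membership $(\gam,\kappa)\in\mathfrak{E}$ precisely guarantees $\chi\in(0,1]$, with $\chi=1$ exactly when $\kappa-\gam=\f{4\mu}{\sig}\gam(\gam^2-1)$. Thus the zeros of $m$ among nonzero frequencies are exactly the points of $V(\gam,\kappa)$ as defined in~\eqref{the frequencies of the kernel}. A direct calculation then verifies that with $L=\mathfrak{l}(\gam,\kappa)$ from~\eqref{the period lengths} we have $V(\gam,\kappa)\subset(\Z/L_1)\times(\Z/L_2)$, so these modes are actually available in the periodic Fourier lattice. This establishes conclusion (1).

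For conclusion (2), I would parametrize the admissible Fourier coefficients on $V(\gam,\kappa)$ by the constraints defining $\bf{X}^{\m{surf}}_L$: reality of $\eta$ enforces $\mathscr{F}[\eta](-\xi)=\overline{\mathscr{F}[\eta](\xi)}$, and second-argument evenness enforces $\mathscr{F}[\eta](\xi_1,-\xi_2)=\mathscr{F}[\eta](\xi_1,\xi_2)$. In the border case $\chi=1$ the set $V$ has only the two points $\pm(\rho,0)$, yielding the real $2$-dimensional space spanned by $\cos(2\pi x_1/L_1)$ and $\sin(2\pi x_1/L_1)$ (the vanishing-average condition is automatic since $\xi\neq 0$ on the support). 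In the interior case $\chi<1$, writing $a_{\sig_1,\sig_2}=\mathscr{F}[\eta](\sig_1\rho\chi,\sig_2\rho\sqrt{1-\chi^2})$, the evenness gives $a_{\sig_1,+}=a_{\sig_1,-}$ and reality gives $a_{-\sig_1,\sig_2}=\overline{a_{\sig_1,\sig_2}}$; combining leaves one complex free parameter, which after inverse Fourier transform produces the $2$-dimensional real span of $\cos(2\pi x_1/L_1)\cos(2\pi x_2/L_2)$ and $\sin(2\pi x_1/L_1)\cos(2\pi x_2/L_2)$. The normalizing constants $\sqrt{2/(L_1L_2)}$ and $\sqrt{4/(L_1L_2)}$ come from fixing an $L^2$-orthonormal basis. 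Smoothness is immediate since each $\upvarphi_\iota$ is a trigonometric polynomial.

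The only nonroutine step is the separation of real and imaginary parts and the case check excluding $\xi_1=0$; the rest is essentially linear algebra on a finite Fourier support. I do not anticipate any genuine obstacle beyond verifying that the algebraic identities defining $\rho$ and $\chi$ are consistent with the constraints encoded in the set $\mathfrak{E}$.
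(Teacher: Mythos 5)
Your proposal is correct and follows essentially the same route as the paper: reduce to the vanishing of the Fourier multiplier of the interior differential polynomial, split the symbol into real and imaginary parts, rule out $\xi_1=0$, identify the zero set with $V(\gam,\kappa)$ using the defining inequalities of $\mathfrak{E}$, and then impose the reality and second-argument-evenness constraints to extract the two-dimensional real span. The only small inexactness is that reality alone gives $a_{-\sig_1,-\sig_2}=\overline{a_{\sig_1,\sig_2}}$ rather than $a_{-\sig_1,\sig_2}=\overline{a_{\sig_1,\sig_2}}$; the version you wrote holds only after combining with evenness, which you invoke anyway, so the argument still goes through.
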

	\begin{proof}
		We shall first prove the first item. Define the auxiliary set
		\begin{equation}\label{the set of interesting frequencies}
			\tilde{V}(\gam,\kappa)=\bcb{\xi\in\tp{\Z/L_1\times\Z/L_2}\setminus\tcb{0}\;:\;\bpm1-\f{\kappa}{\gam}+16\pi^2\mu|\xi|^2\\|\xi|^2-\gam^2\xi_1^2+4\pi^2\sig|\xi|^4\epm=0}.
		\end{equation}
		For $\eta\in\bf{X}_L^{\m{surf}}$ it holds (by taking Fourier transforms) that $Q(\gam,\kappa,\eta)=0$ if and only if for all $\xi\in\Z/L_1\times\Z/L_2$ we have
		\begin{equation}\label{the basic kernel of Q identity}
			\tsb{2\pi\ii\xi_1\tp{\tp{\gam-\kappa}+16\pi^2\mu\gamma|\xi|^2}-4\pi^2\tp{|\xi|^2-\gam^2\xi_1^2+4\pi^2\sig|\xi|^4}}\mathscr{F}[\eta](\xi)=0.
		\end{equation}
		It is then immediate that if $\mathscr{F}[\eta]$ is supported in the set $\tilde{V}(\gam,\kappa)$, then $\eta\in\m{ker}Q(\gam,\kappa,\cdot)\subset\bf{X}_{L}^{\m{surf}}$. In fact the opposite is true as well which we shall now establish. So let us assume that $\eta\in\m{ker}Q(\gam,\kappa,\cdot)$. As~\eqref{the basic kernel of Q identity} must hold, we see that $\mathscr{F}[\eta]$ is necessarily supported on frequencies $\xi\in\tp{\Z/L_1\times\Z/L_2}\setminus\tcb{0}$ such that $\tsb{2\pi\ii\xi_1\tp{\tp{\gam-\kappa}+16\pi^2\mu\gamma|\xi|^2}-4\pi^2\tp{|\xi|^2-\gam^2\xi_1^2+4\pi^2\sig|\xi|^4}}=0$, which (by taking real and imaginary parts) is equivalent to saying
		\begin{equation}
			\xi\in\m{supp}\mathscr{F}[\eta]\imp\bpm\xi_1\tp{1-\f{\kappa}{\gam}+16\pi^2\mu|\xi|^2}\\|\xi|^2-\gam^2\xi_1^2+4\pi^2\sig|\xi|^4\epm=0.
		\end{equation}
		We therefore obtain that $\m{supp}\mathscr{F}[\eta]\subseteq\tilde{V}(\gam,\kappa)$ as soon as we establish that $\xi\in\m{supp}\mathscr{F}[\eta]$ implies that $\xi_1\neq 0$. But this fact can be read off from identity~\eqref{the basic kernel of Q identity}: if $\xi_1=0$, then $\tp{|\xi|^2+4\pi^2\sig|\xi|^4}\mathscr{F}[\eta](\xi)=0$ and hence (since $\xi\neq0$) $\mathscr{F}[\eta](\xi)=0$.
		
		Thus-far we have proved that $\eta\in\m{ker}Q(\gam,\kappa,\cdot)$ if and only if $\m{supp}\mathscr{F}[\eta]\subseteq\tilde{V}(\gam,\kappa)$. The first item will follow once we prove that $\tilde{V}(\gam,\kappa)=V(\gam,\kappa)$ (where the latter set is defined in equation~\eqref{the frequencies of the kernel}). Directly from the definition, for $\xi\in\tp{\Z/L_1\times\Z/L_2}\setminus\tcb{0}$ we deduce that $\xi\in\tilde{V}(\gam,\kappa)$ if and only if
		\begin{equation}
			\tabs{\xi}^2=\f{\kappa/\gam-1}{16\pi^2\mu}=\rho(\gam,\kappa)^2,\quad\xi_1^2=\f{1}{\gam^2}|\xi|^2\tp{1+4\pi^2\sig|\xi|^2}=\rho(\gam,\kappa)^2\chi(\gam,\kappa)^2,
		\end{equation}
		where we recall that the functions $\rho$ and $\kappa$ are defined in equation~\eqref{the helper functions}. We note that it is here where we are using the assumption that $(\gam,\kappa)\in\mathfrak{E}$ to ensure that $\gam$ and $\kappa$ satisfy inequalities consistent with $|\xi|^2>0$ and $\xi_1^2\le|\xi|^2$ and hence the numbers $\rho(\gam,\kappa)$ and $\chi(\gam,\kappa)$ are well-defined.
		
		By calculating the square of the second component of a vector in terms of the squared length and the square of the first component, we deduce from the previous equivalence that $\xi\in\tilde{V}(\gam,\kappa)$ exactly when $\xi_1^2=\rho(\gam,\kappa)^2\chi(\gam,\kappa)^2$ and $\xi_2^2=\rho(\gam,\kappa)^2\tp{1-\chi(\gam,\kappa)^2}$. So indeed $V(\gam,\kappa)=\tilde{V}(\gam,\kappa)$ as claimed and the first item is now established.
		
		Let us now consider the second item. Firstly, one readily verifies (as a consequence of the first item) that $\m{span}\tcb{\upvarphi_+,\upvarphi_-}\subseteq\m{ker}Q(\gam,\kappa,\cdot)$. To show the opposite inclusion, we shall split into two cases. Consider first that $(\gam,\kappa)\in\mathfrak{E}\cap\tp{\pd\mathfrak{E}}$, in other words $0<\kappa-\gam=\f{4\mu}{\sig}\gam\tp{\gam^2-1}$. In this case we have that $\chi(\gam,\kappa)=1$ and so $V(\gam,\kappa)=\tcb{(-\rho(\gam,\kappa),0),(\rho(\gam,\kappa),0)}$. Since $\mathfrak{l}(\gam,\kappa)=\f{1}{\rho(\gam,\kappa)}\tp{1,1}$ we can use Fourier reconstruction paired with the first item to deduce that if $\eta\in\m{ker}Q(\gam,\kappa,\cdot)\subset\bf{X}^{\m{surf}}_L$ then 
		\begin{equation}
			\eta(x_1,x_2)=\f{1}{\sqrt{L_1L_2}}\tp{\mathscr{F}[\eta](-\rho(\gam,\kappa),0)e^{-2\pi\ii x_1/L_1}+\mathscr{F}[\eta](\rho(\gam,\kappa),0)e^{2\pi\ii x_1/L_1}}.
		\end{equation}
		$\eta$ is $\R$-valued and so $\mathscr{F}[\eta](-\rho(\gam,\kappa),0)=\Bar{\mathscr{F}[\eta](\rho(\gam,\kappa),0)}$ and the above expression simplifies to a linear combination of $\upvarphi_+$ and $\upvarphi_-$.
		
		The consideration of the case $0<\kappa-\gam<\f{4\mu}{\sig}\gam\tp{\gam^2-1}$ follows a similar argument. Now $\chi(\gam,\kappa)\in(0,1)$ and so $V(\gam,\kappa)$ consists of 4 points and Fourier reconstruction gives:
		\begin{equation}
			\eta(x_1,x_2)=\f{1}{\sqrt{L_1L_2}}\sum_{\sig_1,\sig_2\in\tcb{-1,1}}\mathscr{F}[\eta](\sig_1/L_1,\sig_2/L_2)e^{2\pi\ii\sig_1x_1/L_1}e^{2\pi\ii\sig_2x_2/L_2}.
		\end{equation}
		$\eta$ is again $\R$-valued, which means $\mathscr{F}[\eta](-\xi)=\Bar{\mathscr{F}[\eta](\xi)}$, and so the expression above simplifies to
		\begin{multline}
			\eta(x_1,x_2)=A_{0}\cos(2\pi x_1/L_1)\cos(2\pi x_2/L_2)+A_{1}\sin(2\pi x_1/L_1)\cos(2\pi x_2/L_2)\\+A_{2}\cos(2\pi x_1/L_1)\sin(2\pi x_2/L_2)+A_{3}\sin(2\pi x_1/L_1)\sin(2\pi x_2/L_2),
		\end{multline}
		for some coefficients $A_0,A_1,A_2,A_3\in\R$. Now we invoke that $\eta$ is an even function in the second variable, which necessitates that $A_2=A_3=0$ and hence $\eta$ is a linear combination of $\upvarphi_+$ and $\upvarphi_-$. This completes the proof.
	\end{proof}
	
	Now that we have established an explicit representation for the kernel of $Q$, we would like to return to the study of $P$. In what follows the following projection operator notation will be used. Define, for $(\gam,\kappa)\in\mathfrak{E}$ and $L=\mathfrak{l}(\gam,\kappa)$, the linear map $\Uppi_{\gam,\kappa}:\bf{X}^{\m{surf}}_{L}\to\m{ker}Q(\gam,\kappa,\cdot)$ via 
	\begin{equation}\label{kernel projection operator}
		\Uppi_{\gam,\kappa}\eta=\upvarphi_+\int_{\T^2_L}\eta\upvarphi_++\upvarphi_-\int_{\T^2_L}\eta\upvarphi_-,
	\end{equation}
	where the functions $\upvarphi_{\pm}$ are as in the second item of Proposition~\ref{proposition on computation of the reduced kernel}. Note that since $\upvarphi_{\pm}$ are orthonormal for the $L^2(\T^2_L)$ scalar product, we have that $\Uppi_{\gam,\kappa}\circ\Uppi_{\gam,\kappa}=\Uppi_{\gam,\kappa}$.
	
	We now combine the previous two results to summarize the information we have on the kernel of $P$.
	\begin{thm}[Kernel synthesis]\label{theorem on kernel synthesis}
		Suppose that $(\gam,\kappa)\in\mathfrak{E}$ and that $L=\mathfrak{l}(\gam,\kappa)\in\tp{\R^+}^2$.
		\begin{enumerate}
			\item The subspace $\m{ker}P(\gam,\kappa,\cdot,\cdot)\subset\bf{X}_L$ is two dimensional and is spanned by the set $\tcb{R_{\gam,\kappa}\upvarphi_+,R_{\gam,\kappa}\upvarphi_-}$ with $\upvarphi_{\pm}$ as described in the second item of Proposition~\ref{proposition on computation of the reduced kernel} and $R_{\gam,\kappa}$ the operator from~\eqref{the kernel isomorphism operator}.
			\item The following direct sum decomposition holds:
			\begin{equation}\label{direct sum decomposition for the domain}
				\bf{X}_L=\m{ker}P(\gam,\kappa,\cdot,\cdot)\oplus\bf{Z}_L(\gam,\kappa)
			\end{equation}
			where
			\begin{equation}
				\bf{Z}_{L}(\gam,\kappa)=\tcb{(u,\eta)\in\bf{X}_L\;:\;\mathscr{F}[\eta](\xi)=0,\;\forall\;\xi\in V(\gam,\kappa)}.
			\end{equation}
		\end{enumerate}
	\end{thm}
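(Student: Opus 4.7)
The first conclusion is essentially a compilation of what has already been proved: by Proposition~\ref{proposition on the reduction of kernels} the restriction of $R_{\gam,\kappa}$ gives a linear isomorphism $\m{ker}Q(\gam,\kappa,\cdot)\to\m{ker}P(\gam,\kappa,\cdot,\cdot)$, and by the second item of Proposition~\ref{proposition on computation of the reduced kernel} the former is two-dimensional with basis $\tcb{\upvarphi_+,\upvarphi_-}$. Applying $R_{\gam,\kappa}$ to this basis proves item (1).

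For item (2) I plan a two-step approach. First I would establish the corresponding decomposition at the level of the free surface space
\begin{equation*}
\bf{X}^{\m{surf}}_L=\m{ker}Q(\gam,\kappa,\cdot)\oplus\bf{Z}^{\m{surf}}_L(\gam,\kappa),\quad\bf{Z}^{\m{surf}}_L(\gam,\kappa):=\tcb{\eta\in\bf{X}^{\m{surf}}_L\;:\;\mathscr{F}[\eta](\xi)=0,\;\forall\;\xi\in V(\gam,\kappa)},
\end{equation*}
by showing that $\Uppi_{\gam,\kappa}$ from~\eqref{kernel projection operator} is the identity on $\m{ker}Q(\gam,\kappa,\cdot)$ (which is immediate from orthonormality of $\upvarphi_\pm$ in $L^2(\T^2_L)$, giving the first item of Proposition~\ref{proposition on computation of the reduced kernel}) and, crucially, that $\eta-\Uppi_{\gam,\kappa}\eta\in\bf{Z}^{\m{surf}}_L(\gam,\kappa)$ for every $\eta\in\bf{X}^{\m{surf}}_L$. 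The latter claim would be verified by Plancherel: expand $\mathscr{F}[\eta]$ at the points of $V(\gam,\kappa)$ using reality of $\eta$ and its evenness in the second variable (which, as in the proof of Proposition~\ref{proposition on computation of the reduced kernel}, cuts the number of real degrees of freedom at $V(\gam,\kappa)$ to exactly two), then verify that the two linear functionals $\eta\mapsto\int_{\T^2_L}\eta\upvarphi_\pm$ are precisely dual to these two real parameters. Consequently subtracting $\Uppi_{\gam,\kappa}\eta$ annihilates $\mathscr{F}[\eta]|_{V(\gam,\kappa)}$.

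With the scalar decomposition in hand, I would then bootstrap to $\bf{X}_L$ as follows. Given $(u,\eta)\in\bf{X}_L$, split $\eta=\Uppi_{\gam,\kappa}\eta+\tp{\eta-\Uppi_{\gam,\kappa}\eta}$, set $(u_0,\eta_0)=R_{\gam,\kappa}(\Uppi_{\gam,\kappa}\eta)\in\m{ker}P(\gam,\kappa,\cdot,\cdot)$ via item (1), and observe that the remainder $(u-u_0,\eta-\Uppi_{\gam,\kappa}\eta)$ lies in $\bf{Z}_L(\gam,\kappa)$ because the defining condition of $\bf{Z}_L(\gam,\kappa)$ constrains only the free surface component, which we arranged to have Fourier support disjoint from $V(\gam,\kappa)$. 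This produces the sum $\bf{X}_L=\m{ker}P(\gam,\kappa,\cdot,\cdot)+\bf{Z}_L(\gam,\kappa)$. For the directness, any $(u,\eta)\in\m{ker}P(\gam,\kappa,\cdot,\cdot)\cap\bf{Z}_L(\gam,\kappa)$ satisfies $(u,\eta)=R_{\gam,\kappa}\eta$ with $\eta\in\m{ker}Q(\gam,\kappa,\cdot)$ by Proposition~\ref{proposition on the reduction of kernels}, while simultaneously $\mathscr{F}[\eta]|_{V(\gam,\kappa)}=0$; combined with the first item of Proposition~\ref{proposition on computation of the reduced kernel} this forces $\eta=0$, hence $(u,\eta)=R_{\gam,\kappa}0=(0,0)$.

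The main technical obstacle I anticipate is the verification that $\eta-\Uppi_{\gam,\kappa}\eta$ has Fourier transform vanishing on the whole of $V(\gam,\kappa)$ (as opposed to merely being $L^2$-orthogonal to $\upvarphi_\pm$). This is a bookkeeping exercise in the 2- and 4-point cases of $V(\gam,\kappa)$ separately, using the same reality-plus-parity reductions that appeared at the end of the proof of Proposition~\ref{proposition on computation of the reduced kernel}. Once that identification is made, the rest of item (2) is essentially formal.
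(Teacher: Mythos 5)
Your proposal is correct and follows essentially the same route as the paper: item (1) is read directly from Propositions~\ref{proposition on the reduction of kernels} and~\ref{proposition on computation of the reduced kernel}, and item (2) splits $\eta$ via the projection $\Uppi_{\gam,\kappa}$ and lifts through $R_{\gam,\kappa}$. The one step you flag as a technical obstacle---that $\mathscr{F}[\eta-\Uppi_{\gam,\kappa}\eta]$ vanishes on all of $V(\gam,\kappa)$, not merely that the difference is $L^2$-orthogonal to $\upvarphi_\pm$---is genuine, but it does not require the case-by-case reality/parity bookkeeping you propose. The paper handles it structurally: write $\eta_1=\eta-\Uppi_{\gam,\kappa}\eta$ as $\eta_{1,0}+\eta_{1,1}$, where $\eta_{1,0}=\mathscr{F}^{-1}[\mathds{1}_{V(\gam,\kappa)}\mathscr{F}[\eta_1]]$. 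Since $\Uppi_{\gam,\kappa}$ is a projection, $\Uppi_{\gam,\kappa}\eta_1=0$; since $\eta_{1,1}$ is frequency-supported off $V(\gam,\kappa)$, $\Uppi_{\gam,\kappa}\eta_{1,1}=0$; hence $\Uppi_{\gam,\kappa}\eta_{1,0}=0$. But the proof of Proposition~\ref{proposition on computation of the reduced kernel} already shows that the functions in $\bf{X}_L^{\m{surf}}$ frequency-supported on $V(\gam,\kappa)$ are exactly $\m{span}\tcb{\upvarphi_+,\upvarphi_-}$, on which $\Uppi_{\gam,\kappa}$ acts as the identity; so $\eta_{1,0}=0$. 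This replaces your anticipated bookkeeping with a two-line argument, and otherwise your plan (including the directness argument via disjoint Fourier supports and $u_0=R_{\gam,\kappa}\eta_0$) matches the paper.
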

	\begin{proof}
		The first item is immediate from the combination of Propositions~\ref{proposition on the reduction of kernels} and~\ref{proposition on computation of the reduced kernel}: the former result shows that $R_{\gam,\kappa}:\m{ker}Q(\gam,\kappa,\cdot)\to\m{ker}P(\gam,\kappa,\cdot,\cdot)$ is an isomorphism while the latter result shows that $\m{ker}Q(\gam,\kappa,\cdot)=\m{span}\tcb{\upvarphi_+,\upvarphi_-}$.
		
		Let us now focus on proving the second item. Suppose that $(u,\eta)\in\bf{X}_L$ and define $(u_0,\eta_0),(u_1,\eta_1)\in\bf{X}_L$ via $(u_0,\eta_0)=R_{\gam,\kappa}\Uppi_{\gam,\kappa}\eta$ (recall~\eqref{kernel projection operator}) and $(u_1,\eta_1)=(u,\eta)-(u_0,\eta_0)$. By construction we have that $(u,\eta)=(u_0,\eta_0)+(u_1,\eta_1)$. Since $\Uppi_{\gam,\kappa}\eta\in\m{ker}Q(\gam,\kappa,\cdot)$ Proposition~\ref{proposition on the reduction of kernels} then assures us that $(u_0,\eta_0)\in\m{ker}P(\gam,\kappa,\cdot,\cdot)$. To verify that $(u_1,\eta_1)\in\bf{Z}_L(\gam,\kappa)$, we need that $\mathscr{F}[\eta_1]$ vanishes on $V(\gam,\kappa)$. Since $\eta_1=(I-\Uppi_{\gam,\kappa})\eta$ and $\Uppi_{\gam,\kappa}$ is a projection operator, we find that $\Uppi_{\gam,\kappa}\eta_1=0$. On the other hand, we can write $\eta_1=\eta_{1,0}+\eta_{1,1}$ with $\eta_{1,0}=\mathscr{F}^{-1}[\mathds{1}_{V(\gam,\kappa)}\mathscr{F}[\eta_1]]$ and $\eta_{1,1}=\eta_1-\eta_{1,0}$. Since $\eta_{1,1}$ contains exclusively frequencies outside of $V(\gam,\kappa)$, we have $\Uppi_{\gam,\kappa}\eta_{1,1}=0$ as well and hence $\Uppi_{\gam,\kappa}\eta_{1,0}=0$. On the other hand $\eta_{1,0}\in\m{span}\tcb{\upvarphi_+,\upvarphi_-}$ since the proof of Proposition~\ref{proposition on computation of the reduced kernel} showed that $\m{span}\tcb{\upvarphi_+,\upvarphi_-}=\tcb{h\in\bf{X}_L^{\m{surf}}\;:\;\m{supp}\mathscr{F}[h]\subseteq V(\gam,\kappa)}$. So we deduce that $\eta_{1,0}=0$ and so the inclusion $(u_1,\eta_1)\in\bf{Z}_L(\gam,\kappa)$ follows.
		
		We have established that the subspaces on the right of~\eqref{direct sum decomposition for the domain} sum to $\bf{X}_L$. Let us now argue that the sum is direct. Suppose that $(u_0,\eta_0)\in\m{ker}P(\gam,\kappa,\cdot,\cdot)$ and $(u_1,\eta_1)\in\bf{Z}_L(\gam,\kappa)$ are such that $(u_0,\eta_0)+(u_1,\eta_1)=0$. Upon restricting our attention to the $\eta$ component we learn that $\eta_0+\eta_1=0$ with $\mathscr{F}[\eta_0]$ supported in $V(\gam,\kappa)$ (thanks to Propositions~\ref{proposition on the reduction of kernels} and~\ref{proposition on computation of the reduced kernel}) and $\mathscr{F}[\eta_1]$ supported in $\tp{\Z/L_1\times\Z/L_2}\setminus V(\gam,\kappa)$. So necessarily we have $\eta_0=\eta_1=0$. We also know that $(u_0,\eta_0)=R_{\gam,\kappa}\eta_0$ and so $u_0=0$ as well. At last we can conclude that $u_1=0$ and so the sum is indeed direct.
	\end{proof}
	
	\subsection{Analysis of the range}\label{subsection on analysis of the range}
	
	In this subsection we complement our previous analysis by now studying the ranges of the operators $P$ from~\eqref{the family of linearized operators P}. We shall find the following auxiliary linear operator quite useful: define $S_\gam:\Bar{\bf{Y}}_L\to H^3_\bullet(\T^2_L)$, for $\gam\in\R$, via
	\begin{equation}\label{the auxilary range operator}
		S_\gam(f,g)=\Delta^{-2}\tsb{\grad\cdot f-(1-\gam\pd_1-4\mu\Delta)g},\quad(f,g)\in\Bar{\bf{Y}}_L.
	\end{equation}
	Note that $S_\gam$ is also a parity preserving operator in the sense that $S_\gam:\bf{Y}_L\to\bf{X}_L^{\m{surf}}$. Let us now classify the image of the operators $P(\gam,\kappa,\cdot,\cdot)$, viewing them as mapping $\bf{X}_L\to\bf{Y}_L$.
	\begin{prop}[Computation of the range]\label{Prop on computation of the range}
		Suppose that $(\gam,\kappa)\in\mathfrak{E}$ and $L=\mathfrak{l}(\gam,\kappa)\in\tp{\R^+}^2$. The following hold.
		\begin{enumerate}
			\item For $(f,g)\in\bf{Y}_L$ we have that $(f,g)\in\m{ran}P(\gam,\kappa,\cdot,\cdot)$ if and only if $\m{supp}\mathscr{F}[S_\gam(f,g)]\subseteq\tp{\Z/L_1\times\Z/L_2}\setminus V(\gam,\kappa)$.
			\item The subspace $\m{ran}P(\gam,\kappa,\cdot,\cdot)$ is closed.
		\end{enumerate}
	\end{prop}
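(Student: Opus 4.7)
The plan is to mirror the strategy of Proposition~\ref{proposition on the reduction of kernels}, reducing the range problem for $P(\gam,\kappa,\cdot,\cdot):\bf{X}_L\to\bf{Y}_L$ to a range problem for the scalar operator $Q(\gam,\kappa,\cdot):\bf{X}_L^{\m{surf}}\to\bf{X}_L^{\m{surf}}$, whose image was implicitly classified by the Fourier analysis of Proposition~\ref{proposition on computation of the reduced kernel}. The key identity linking the two is obtained by applying $\grad\cdot$ to the momentum equation in $P(\gam,\kappa,u,\eta)=(f,g)$, substituting $\grad\cdot u=g+\gam\pd_1\eta$ from the continuity equation, and using $\grad\cdot\grad\cdot\mathbb{S}u=4\Delta\tp{\grad\cdot u}$: after grouping terms and applying $\Delta^{-2}$, one arrives at
\begin{equation*}
    Q(\gam,\kappa,\eta)=S_\gam(f,g).
\end{equation*}

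The necessary direction of item (1) follows immediately: the identity says $S_\gam(f,g)$ lies in the range of $Q(\gam,\kappa,\cdot)$, and the symbol calculation already performed in Proposition~\ref{proposition on computation of the reduced kernel} shows that this range is exactly the subspace of $\bf{X}_L^{\m{surf}}$ of functions Fourier-supported off of $V(\gam,\kappa)$ (the high-frequency asymptotic $-\sig$ ensures the inverse symbol is bounded on the complement lattice). For sufficiency, given $(f,g)\in\bf{Y}_L$ with $\mathscr{F}[S_\gam(f,g)]$ vanishing on $V(\gam,\kappa)$, I would invert the $Q$-symbol on the non-$V$ frequencies to produce $\eta\in\bf{X}_L^{\m{surf}}$ solving $Q(\gam,\kappa,\eta)=S_\gam(f,g)$, and then build $u$ via its Leray decomposition:
\begin{equation*}
    \mathbb{P}u=(1-\gam\pd_1-\mu\Delta)^{-1}\tp{\mathbb{P}f+\kappa\mathbb{P}(\eta e_1)},\quad(I-\mathbb{P})u=\grad\Delta^{-1}\tp{g+\gam\pd_1\eta}.
\end{equation*}
The continuity equation is then immediate, the Leray-projected momentum equation recovers $\mathbb{P}f$ by the choice of $\mathbb{P}u$, and the remaining potential part of the momentum equation is an equality of two gradients of zero-average scalars on the torus; that equality reduces to agreement of divergences, and that is once again the identity $Q(\gam,\kappa,\eta)=S_\gam(f,g)$. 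Membership $(u,\eta)\in\bf{X}_L$ would be verified by tracking second-variable parity through each operator invoked, since all the Fourier multipliers in use are either even in $\xi_2$ or involve $\mathcal{R}_2$ paired with the odd component of the vector field.

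Item (2) is a clean corollary of (1): the set of $h\in\bf{X}_L^{\m{surf}}$ Fourier-supported away from the finite set $V(\gam,\kappa)$ is closed (a finite codimension constraint), and its preimage under the continuous linear map $S_\gam:\bf{Y}_L\to\bf{X}_L^{\m{surf}}$ is a closed subspace of $\bf{Y}_L$. The main technical point I expect to require care is the algebraic verification that the divergence calculation really does collapse precisely to $Q(\gam,\kappa,\eta)=S_\gam(f,g)$ with the coefficients matching those in the definition of $S_\gam$; once that identity is in hand, the rest falls into place as routine bookkeeping in Fourier analysis and the Helmholtz--Leray decomposition.
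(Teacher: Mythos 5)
Your proposal is correct and follows essentially the same route as the paper: derive the scalar reduction $Q(\gam,\kappa,\eta)=S_\gam(f,g)$ (the paper gets this by Leray-projecting the momentum equation and eliminating $(I-\mathbb{P})u$, you get it by applying $\grad\cdot$ directly and using $\grad\cdot\grad\cdot\mathbb{S}u=4\Delta(\grad\cdot u)$ — the same identity reached from different ends), read necessity off the vanishing of the symbol $q_{\gam,\kappa}$ on $V(\gam,\kappa)$, invert the symbol off $V(\gam,\kappa)$ (with the $-1/\sig$ high-frequency asymptotic giving boundedness) to construct $\eta$, recover $u$ from the Leray decomposition, and obtain closedness as the preimage of a finite-codimension closed set under the bounded map $S_\gam$. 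The formula you propose for $(I-\mathbb{P})u$ agrees with the paper's after noting $(I-\mathbb{P})(\eta e_1)=\Delta^{-1}\grad\pd_1\eta$, and your verification of the potential part of the momentum equation (compare divergences of gradients of mean-zero scalars) is the same bookkeeping the paper performs.
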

	\begin{proof}
		Let us begin by establishing the first item. Assume first that $(f,g)\in\m{ran}P(\gam,\kappa,\cdot,\cdot)$ so that there exists $(u,\eta)\in\bf{X}_L$ such that $P(\gam,\kappa,u,\eta)=(f,g)$. We therefore have the equations
		\begin{equation}\label{expanded P}
			-\gam\pd_1 u+u-\mu\grad\cdot\mathbb{S}u+\grad\tp{1-\sig\Delta}\eta=\kappa\eta e_1+f,\quad\grad\cdot u=\gam\pd_1\eta+g.
		\end{equation}
		The first of these can be decomposed into solenoidal and potential parts via the Leray projection operator $\mathbb{P}$ and the second can be written in terms of $(I-\mathbb{P})$:
		\begin{multline}\label{Leray a GoGo}
			(1-\gam\pd_1-\mu\Delta)\mathbb{P}u=\kappa\mathbb{P}(\eta e_1)+\mathbb{P}f,\quad (1-\gam\pd_1-4\mu\Delta)(I-\mathbb{P})u+\grad(1-\sig\Delta)\eta=\kappa(I-\mathbb{P})(\eta e_1)+(I-\mathbb{P})f,\\
			(I-\mathbb{P})u=\gam(I-\mathbb{P})(\eta e_1)+\Delta^{-1}\grad g.
		\end{multline}
		We then substitute the final identity in~\eqref{Leray a GoGo} into the penultimate one; this permits us to derive that the above equations are equivalent to
		\begin{equation}\label{the isolated eta equation}
			Q(\gam,\kappa,\eta)=S_\gam(f,g)
		\end{equation}
		and
		\begin{equation}\label{recovery of the velocity field}
			u=(1-\gam\pd_1-\mu\Delta)^{-1}\tp{\kappa\mathbb{P}(\eta e_1)+\mathbb{P}f}+\gam(I-\mathbb{P})(\eta e_1)+\Delta^{-1}\grad g.
		\end{equation}
		Recall that the operators $Q$ are defined in~\eqref{the family of auxilary linearized operators Q}. Equation~\eqref{recovery of the velocity field} tell us that the velocity $u$ is entirely determined from the data $(f,g)$ and the free surface $\eta$ while~\eqref{the isolated eta equation} is an isolated equation for the free surface in terms of the data.
		
		Upon taking the Fourier transform of identity~\eqref{the isolated eta equation} we find that for all $\xi\in\tp{\Z/L_1\times\Z/L_2}\setminus\tcb{0}$.
		\begin{equation}\label{the isolated eta equation in Fourier space}
			\tp{4\pi^2|\xi|^2}^{-2}q_{\gam,\kappa}(\xi)\mathscr{F}[\eta](\xi)=\mathscr{F}[S_\gam(f,g)](\xi)
		\end{equation}
		for the symbol
		\begin{equation}\label{the q symbol}
			q_{\gam,\kappa}(\xi)=2\pi\ii\xi_1\tp{\gam-\kappa+16\pi^2\mu\gam|\xi|^2}-4\pi^2\tp{|\xi|^2-\gam^2\xi_1^2+4\pi^2\sig|\xi|^4}.
		\end{equation}
		The proof of Proposition~\ref{proposition on computation of the reduced kernel} shows that
		\begin{equation}
			\tcb{\xi\in\tp{\Z/L_1\times\Z/L_2}\setminus\tcb{0}\;:\;q_{\gam,\kappa}(\xi)=0}=\tilde{V}(\gam,\kappa)=V(\gam,\kappa),
		\end{equation}
		where the middle set is given by~\eqref{the set of interesting frequencies}. Therefore, a necessary condition for~\eqref{the isolated eta equation in Fourier space} to hold is that $\mathscr{F}[S_\gam(f,g)]$ vanishes whenever $q_{\gam,\kappa}$ vanishes, which is the same as saying that $\mathscr{F}[S_\gam(f,g)]$ is supported on the complement of $V(\gam,\kappa)$. Thus we have established the necessary direction of the first item.
		
		Let us now focus on the sufficient direction. Suppose that $(f,g)\in\bf{Y}_L$ satisfy $\m{supp}\mathscr{F}[S_\gam(f,g)]\subseteq\tp{\Z/L_1\times\Z/L_2}\setminus V(\gam,\kappa)$. We propose that one can define $(u,\eta)$ via~\eqref{the isolated eta equation} and~\eqref{recovery of the velocity field} under this hypothesis. We shall define $\eta\in\bf{X}_L^{\m{surf}}$ via the following prescription of its Fourier modes $\xi\in\Z/L_1\times\Z/L_2$:
		\begin{equation}\label{the construction of eta for the linearized problem}
			\mathscr{F}[\eta](\xi)=\begin{cases}
				0&\text{if }\xi\in V(\gam,\kappa)\cup\tcb{0},\\
				\tp{q_{\gam,\kappa}(\xi)}^{-1}\tp{4\pi^2|\xi|^2}^2\mathscr{F}[S_\gam(f,g)](\xi)&\text{otherwise.}
			\end{cases}
		\end{equation}
		Note that if $\xi\notin V(\gam,\kappa)\cup\tcb{0}$ then $q_{\gam,\kappa}(\xi)\neq0$; additionally it holds that
		\begin{equation}\label{the large frequency limit}
			\f{\tp{4\pi^2|\xi|^2}^2}{q_{\gam,\kappa}(\xi)}\to-\f{1}{\sig}\quad\text{as}\quad|\xi|\to\infty
		\end{equation}
		and hence not only is $\eta\in\bf{X}_L^{\m{surf}}$ well-defined, we also have the estimate $\tnorm{\eta}_{\bf{X}_L^{\m{surf}}}\lesssim\tnorm{S_\gam(f,g)}_{\bf{X}_L^{\m{surf}}}\lesssim\tnorm{(f,g)}_{\bf{Y}_L}$ for an implicit constant depending only $\gam$, $\kappa$, $\mu$, and $\sig$. We then define $u$ in terms of $\eta$, $f$, and $g$ according to equation~\eqref{recovery of the velocity field}. It is then readily verified that $u\in\bf{X}_L^{\m{velo}}$ with $\tnorm{u}_{\bf{X}_L^{\m{velo}}}\lesssim\tnorm{\tp{f,g}}_{\bf{Y}_L}$. Let us now verify that $P(\gam,\kappa,u,\eta)=(f,g)$; by the reductions made at the beginning of the proof it is equivalent to show that~\eqref{the isolated eta equation} and~\eqref{recovery of the velocity field} are satisfied. The latter of this is automatic while the former of these follows from~\eqref{the construction of eta for the linearized problem} and the assumption that $\mathscr{F}[S_\gam(f,g)](\xi)=0$ for $\xi\in V(\gam,\kappa)\cup\tcb{0}$. Thus the first item's proof is now complete.
		
		The second item is easy now that we have the first. The range is a closed set since the first conclusion equates the range to the intersection of finitely many kernels of bounded linear maps on $\bf{Y}_L$; namely the functions $(f,g)\mapsto\mathscr{F}[S_\gam(f,g)](\xi)$ for $\xi\in V(\gam,\kappa)$.
	\end{proof}
	
	Next we shall consider certain complimented subspaces to the range of $P$ which are related to the partial derivatives of $P$ with respect to the $\gam$ and $\kappa$ parameters and its kernel. The following notation is set: the mappings $P_{\m{s}},P_{\m{t}}:\bf{X}_{L}\to\bf{Y}_L$ are defined via
	\begin{equation}\label{the operators Ps and Pt}
		P_{\m{s}}(u,\eta)=-\tp{\pd_1 u,\pd_1\eta},\quad P_{\m{t}}(u,\eta)=-(\eta e_1,0).
	\end{equation}
	Notice that
	\begin{equation}
		P(\gam,\kappa,u,\eta)=P(0,0,u,\eta)+\gam P_{\m{s}}(u,\eta)+\kappa P_{\m{t}}(u,\eta),
	\end{equation}
	for $(u,\eta)\in\bf{X}_L$ and $(\gam,\kappa)\in\R^2$. In particular we have $D_{1,1}P(\gam,\kappa,\cdot,\cdot)=P_{\m{s}}$ and $D_{1,2}P(\gam,\kappa,\cdot,\cdot)=P_{\m{t}}$, where $D_{1,1}$ and $D_{1,2}$ refer to the coordinate partial derivatives with respect to the $\R^2$ factor in the definition of $P$ (which is~\eqref{the family of linearized operators P}).
	
	\begin{lem}[Bases for the kernel of $Q$]\label{lemma on bases for the kernel of Q}
		Suppose that $\tp{\gam,\kappa}\in\mathfrak{E}$, $L=\mathfrak{l}(\gam,\kappa)\in\tp{\R^+}^2$. The following hold:
		\begin{enumerate}
			\item $S_\gam P_{\m{s}},S_\gam P_{\m{t}}:\m{ker}P(\gam,\kappa,\cdot,\cdot)\to\m{ker}Q(\gam,\kappa,\cdot)$, where the operator $S_\gam$ is defined in~\eqref{the auxilary range operator} and the operators $P_{\m{s}}$ and $P_{\m{t}}$ are given in~\eqref{the operators Ps and Pt}.
			\item For all $(u,\eta)\in\m{ker}P(\gam,\kappa,\cdot,\cdot)\setminus\tcb{0}$ the linear map
			\begin{equation}\label{the linear map that gives and interesting basis for q}
				\R^2\ni(\al,\be)\mapsto\al S_\gam P_{\m{s}}(u,\eta)+\be S_{\gam}P_{\m{t}}(u,\eta)\in\m{ker}Q(\gam,\kappa,\cdot)
			\end{equation}
			is invertible.
		\end{enumerate}
	\end{lem}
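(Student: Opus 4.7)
The plan is to exploit the divergence identity $\grad\cdot u=\gam\pd_1\eta$ that holds on $\m{ker}P(\gam,\kappa,\cdot,\cdot)$ (as observed in the proof of Proposition~\ref{proposition on the reduction of kernels}): under this identity both compositions $S_\gam P_{\m{s}}$ and $S_\gam P_{\m{t}}$ reduce to scalar Fourier multipliers acting on $\eta$ alone. Plugging the definitions~\eqref{the operators Ps and Pt} into~\eqref{the auxilary range operator} and invoking the divergence identity, I would derive the explicit formulas $S_\gam P_{\m{s}}(u,\eta)=\Delta^{-2}\tp{\pd_1\eta-2\gam\pd_1^2\eta-4\mu\Delta\pd_1\eta}$ and $S_\gam P_{\m{t}}(u,\eta)=-\Delta^{-2}\pd_1\eta$, which are manifestly scalar Fourier multipliers applied to $\eta$. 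Since such multipliers cannot enlarge the frequency support, and since $(u,\eta)\in\m{ker}P$ forces $\eta\in\m{ker}Q$ (Proposition~\ref{proposition on the reduction of kernels}) and hence $\m{supp}\mathscr{F}[\eta]\subseteq V(\gam,\kappa)$ (first item of Proposition~\ref{proposition on computation of the reduced kernel}), the same support inclusion transfers to $S_\gam P_{\m{s}}(u,\eta)$ and $S_\gam P_{\m{t}}(u,\eta)$. A second appeal to the first item of Proposition~\ref{proposition on computation of the reduced kernel} yields item~(1).

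For item~(2), since both $\R^2$ and $\m{ker}Q(\gam,\kappa,\cdot)$ have dimension two (by the second item of Proposition~\ref{proposition on computation of the reduced kernel}), it suffices to establish injectivity of~\eqref{the linear map that gives and interesting basis for q}. The key simplification is that for any $\xi\in V(\gam,\kappa)$ one has $|\xi|^2=\rho(\gam,\kappa)^2$ by definition~\eqref{the helper functions}, whence the on-kernel identity $1+16\pi^2\mu|\xi|^2=\kappa/\gam$. A direct Fourier computation then shows that on $V(\gam,\kappa)$ the symbols of $S_\gam P_{\m{s}}$ and $S_\gam P_{\m{t}}$ reduce to
\begin{equation*}
m_{\m{s}}(\xi)=\f{2\pi\ii\xi_1\tp{\kappa/\gam}+8\pi^2\gam\xi_1^2}{(4\pi^2|\xi|^2)^2}\quad\text{and}\quad m_{\m{t}}(\xi)=\f{-2\pi\ii\xi_1}{(4\pi^2|\xi|^2)^2},
\end{equation*}
respectively. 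Now, given $(u,\eta)\neq 0$ in $\m{ker}P$, the formula~\eqref{the kernel isomorphism operator} for $R_{\gam,\kappa}$ shows $\eta\neq 0$, so there exists $\xi_\ast\in V(\gam,\kappa)$ with $\mathscr{F}[\eta](\xi_\ast)\neq 0$. Evaluating the identity $\al S_\gam P_{\m{s}}(u,\eta)+\be S_\gam P_{\m{t}}(u,\eta)=0$ in Fourier at $\xi_\ast$ yields $\al m_{\m{s}}(\xi_\ast)+\be m_{\m{t}}(\xi_\ast)=0$. Using that every $\xi\in V(\gam,\kappa)$ satisfies $\xi_1\neq 0$ (immediate from~\eqref{the frequencies of the kernel} since $\chi(\gam,\kappa)>0$) together with $\gam>0$ (from $\mathfrak{E}\subset(1,\infty)^2$), the real part of this scalar equation forces $\al=0$, and the imaginary part then yields $\be=0$.

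The main obstacle I anticipate is purely computational: the Fourier computation of $m_{\m{s}}$ must correctly collect three derivative contributions and then invoke the pivotal on-kernel reduction $1+16\pi^2\mu|\xi|^2=\kappa/\gam$ to arrive at a form simple enough for the real-versus-imaginary linear independence test to succeed. Once the algebra is in hand, no delicate analysis is required—the remainder is a short scalar linear algebra argument carried out at a single well-chosen frequency.
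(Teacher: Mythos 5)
Your proof is correct and follows essentially the same route as the paper: you use the divergence identity $\grad\cdot u=\gam\pd_1\eta$ (valid on $\m{ker}P(\gam,\kappa,\cdot,\cdot)$) to collapse $S_\gam P_{\m{s}}$ and $S_\gam P_{\m{t}}$ to scalar Fourier multipliers in $\eta$, arriving at the same formulae as~\eqref{the expressions for the partial operators on the kernel}, transfer the frequency support $V(\gam,\kappa)$ to obtain item~(1), and exploit the on-kernel simplification $1+16\pi^2\mu|\xi|^2=\kappa/\gam$ for item~(2). The only cosmetic difference is in certifying invertibility: you separate real and imaginary parts of the symbol at a single representative $\xi_\ast\in V(\gam,\kappa)$, whereas the paper equivalently writes the images of $P_{\m{s}}$ and $P_{\m{t}}$ in the $L^2$-orthogonal basis $\{\eta,\pd_1\eta\}$ of $\m{ker}Q(\gam,\kappa,\cdot)$ and checks a $2\times 2$ determinant; both reduce to the same observation that the symbol of $\pd_1$ is purely imaginary and that $\gam\neq 0$ and $\xi_1\neq 0$ on $V(\gam,\kappa)$.
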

	\begin{proof}
		We begin the proof by computing the operators $S_\gam P_{\m{s}}$ and $S_\gam P_{\m{t}}$ on $\m{ker}P(\gam,\kappa,\cdot,\cdot)$, so let $(u,\eta)$ belong to this latter set. Thanks to Proposition~\ref{proposition on the reduction of kernels} we have $(u,\eta)=R_{\gam,\kappa}\eta$ with $\eta\in\m{ker}Q(\gam,\kappa,\cdot)$. So we can directly compute that $\Delta^2 S_\gam P_{\m{s}}(u,\eta)=-\grad\cdot(\pd_1 u)+(1-\gam\pd_1-4\mu\Delta)\pd_1\eta=-2\gam\pd_1^2\eta+(1-4\mu\Delta)\pd_1\eta$ and $\Delta^2 S_\gam P_{\m{t}}\eta=-\grad\cdot(\eta e_1)=-\pd_1\eta$ and hence (since $\eta$ is frequency supported $V(\gam,\kappa)$)
		\begin{multline}\label{the expressions for the partial operators on the kernel}
			S_\gam P_{\m{s}}(u,\eta)=\f{1}{(4\pi^2)^2\rho(\gam,\kappa)^4}\tp{8\gam\pi^2\rho(\gam,\kappa)^2\chi(\gam,\kappa)^2\eta+(1+16\pi^2\mu\rho(\gam,\kappa)^2)\pd_1\eta},\\S_\gam P_{\m{t}}\eta=-\f{1}{(4\pi^2)^2\rho(\gam,\kappa)^4}\pd_1\eta.
		\end{multline}
		Notice that the operations $\pd_1$ and $\pd_1^2$ preserve the subspace $\m{ker}Q(\gam,\kappa,\cdot)$ (as can be readily checked on the basis $\tcb{\upvarphi_+,\upvarphi_-}$) and hence we can deduce from the expressions~\eqref{the expressions for the partial operators on the kernel} the claimed mapping properties of the first item.
		
		We now prove the second item by checking a matrix representation for the linear map~\eqref{the linear map that gives and interesting basis for q}. The domain $\R^2$ is equipped with the standard basis while the codomain $\m{ker}Q(\gam,\kappa,\cdot)$ shall be equipped with the basis $\tcb{\eta,\pd_1\eta}$. That this latter set is indeed a basis, we shall check now. Supposing that $(u,\eta)\in\m{ker}P(\gam,\kappa,\cdot,\cdot)\setminus\tcb{0}$ leads to $\eta\neq0$ thanks to the previously mentioned identity $(u,\eta)=R_{\gam,\kappa}\eta$. This $\eta$ belongs to $\m{ker}Q(\gam,\kappa,\cdot)$ and (by Proposition~\ref{proposition on computation of the reduced kernel}) has frequency support in the set $V(\gam,\kappa)$. As this set does not intersect $\tcb{\xi\in\Z/L_1\times\Z/L_2\;:\;\xi_1=0}$  we also have $\pd_1\eta\neq0$. We also know that $\m{ker}Q(\gam,\kappa,\cdot)$ is two dimensional and can be equipped with the inner product $\tbr{\cdot,\cdot}$ that is given by the $L^2(\T^2_L)$-inner product of functions. As $\tbr{\eta,\pd_1\eta}=0$ (by the divergence theorem) we necessarily have that $\eta$ and $\pd_1\eta$ are linearly independent and hence spanning. Written in these aforementioned bases, the matrix $M\in\R^{2\times 2}$ corresponding to the linear map~\eqref{the linear map that gives and interesting basis for q} (which is computed from the expressions~\eqref{the expressions for the partial operators on the kernel}) is given by
		\begin{equation}
			M=\f{1}{\tp{4\pi^2}^2\rho(\gam,\kappa)^4}\bpm8\gam\pi^2\rho(\gam,\kappa)^2\chi(\gam,\kappa)^2&0\\1+16\pi^2\mu\rho(\gam,\kappa)^2&-1\epm.
		\end{equation}
		As $\rho(\gam,\kappa),\chi(\gam,\kappa)\in\R^+$ whenever $(\gam,\kappa)\in\mathfrak{E}$, we see that $\det M\neq 0$ and the second item now follows.
	\end{proof}
	
	\begin{prop}[Complements of the range]\label{prop on complements of the range}
		Let $(\gam,\kappa)\in\mathfrak{E}$, $L=\mathfrak{l}(\gam,\kappa)$. Then for all $(u,\eta)\in\m{ker}P(\gam,\kappa,\cdot,\cdot)\setminus\tcb{0}$ we have
		\begin{equation}\label{the direct sum decomposition of the codomain}
			\m{ran}P(\gam,\kappa,\cdot,\cdot)\oplus\m{span}P_{\m{s}}(u,\eta)\oplus\m{span}P_{\m{t}}(u,\eta)=\bf{Y}_L.
		\end{equation}
		In particular, $\m{codim}\;\m{ran}P(\gam,\kappa,\cdot,\cdot)=2$.
	\end{prop}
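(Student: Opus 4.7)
The plan is to leverage the earlier characterization of $\m{ran}\,P(\gam,\kappa,\cdot,\cdot)$ in terms of the Fourier support of $S_\gam$ (Proposition on computation of the range), together with the invertibility of the $2\times 2$ linear map of Lemma on bases for the kernel of $Q$. Intuitively, the operator $S_\gam$ collapses the range condition into a simple frequency-support condition on $\bf{X}_L^{\m{surf}}$, and pulls the two candidate complement vectors $P_{\m{s}}(u,\eta),P_{\m{t}}(u,\eta)$ over to a basis of $\m{ker}\,Q(\gam,\kappa,\cdot)$; this lets us perform the required decomposition frequency-by-frequency.

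First I would prove the sum covers $\bf{Y}_L$. Given an arbitrary $(f,g)\in\bf{Y}_L$, compute $S_\gam(f,g)\in\bf{X}_L^{\m{surf}}$ and split it as $S_\gam(f,g)=h_0+h_1$, where $h_0$ is the portion whose Fourier transform is supported on $V(\gam,\kappa)$ (hence $h_0\in\m{ker}\,Q(\gam,\kappa,\cdot)$ by Proposition on computation of the reduced kernel) and $h_1$ has Fourier transform vanishing on $V(\gam,\kappa)$. By the second item of Lemma on bases for the kernel of $Q$, there exist unique $\al,\be\in\R$ with $h_0=\al S_\gam P_{\m{s}}(u,\eta)+\be S_\gam P_{\m{t}}(u,\eta)$. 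Linearity of $S_\gam$ then gives $S_\gam\bigl((f,g)-\al P_{\m{s}}(u,\eta)-\be P_{\m{t}}(u,\eta)\bigr)=h_1$, whose Fourier transform is supported off $V(\gam,\kappa)$, so by Proposition on computation of the range we conclude
\begin{equation*}
(f,g)-\al P_{\m{s}}(u,\eta)-\be P_{\m{t}}(u,\eta)\in\m{ran}\,P(\gam,\kappa,\cdot,\cdot),
\end{equation*}
which shows that the sum in~\eqref{the direct sum decomposition of the codomain} equals all of $\bf{Y}_L$.

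Next I would argue the sum is direct. Suppose $r+\al P_{\m{s}}(u,\eta)+\be P_{\m{t}}(u,\eta)=0$ with $r\in\m{ran}\,P(\gam,\kappa,\cdot,\cdot)$ and $\al,\be\in\R$. Applying the bounded linear map $S_\gam$ and using the first item of Proposition on computation of the range together with the first item of Lemma on bases for the kernel of $Q$, the term $S_\gam r$ has Fourier transform supported on the complement of $V(\gam,\kappa)$, while $\al S_\gam P_{\m{s}}(u,\eta)+\be S_\gam P_{\m{t}}(u,\eta)$ lies in $\m{ker}\,Q(\gam,\kappa,\cdot)$ and hence has Fourier transform supported on $V(\gam,\kappa)$. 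Since these two frequency sets are disjoint, both functions must vanish individually. The invertibility statement in the second item of Lemma on bases for the kernel of $Q$ then forces $\al=\be=0$, and hence $r=0$, proving directness. The codimension statement then reads off immediately from the $2$-dimensionality of $\m{span}\,P_{\m{s}}(u,\eta)\oplus\m{span}\,P_{\m{t}}(u,\eta)$, which is itself a consequence of the same invertibility.

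The only potentially subtle point is making sure the Fourier-support characterization of $\m{ran}\,P$ and the explicit form of $\m{ker}\,Q$ fit together cleanly under $S_\gam$; this is essentially bookkeeping, and the whole proof is driven by the two prior structural results, with the invertibility of the $2\times 2$ matrix $M$ from Lemma on bases for the kernel of $Q$ doing the essential nondegeneracy work.
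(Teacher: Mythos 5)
Your proposal is correct and takes essentially the same route as the paper: the paper's proof also reduces everything to $S_\gam$ and the two lemmas you invoke, with the only cosmetic difference that the paper phrases the split of $S_\gam(f,g)$ via the explicit projector $\Uppi_{\gam,\kappa}$ onto $\m{span}\{\upvarphi_+,\upvarphi_-\}$ rather than via a direct Fourier-support decomposition, and then has to re-derive the support condition from $\Uppi_{\gam,\kappa}S_\gam(\tilde f,\tilde g)=0$, a step your formulation sidesteps.
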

	\begin{proof}
		Let us establish first that the sum of the three subspaces on the left of~\eqref{the direct sum decomposition of the codomain} is $\bf{Y}_L$. Let $(f,g)\in\bf{Y}_L$. Set $h=\Uppi_{\gam,\kappa}S_\gam(f,g)\in\m{ker}Q(\gam,\kappa)$ (recall that the projection operator $\Uppi_{\gam,\kappa}$ is defined in equation~\eqref{kernel projection operator} while the operator $S_\gam$ is from~\eqref{the auxilary range operator}). Thanks to the second item of Lemma~\ref{lemma on bases for the kernel of Q} we are assured the existence of $\tp{\al,\be}\in\R^2$ such that $\al S_\gam P_{\m{s}}(u,\eta)+\be S_\gam P_{\m{t}}(u,\eta)=h$. We set $(\tilde{f},\tilde{g})=(f,g)-\al P_{\m{s}}(u,\eta)-\be P_{\m{t}}(u,\eta)\in\bf{Y}_L$ and claim that $(\tilde{f},\tilde{g})\in\m{ran}P(\gam,\kappa,\cdot,\cdot)$. To check this we shall use the first item of Proposition~\ref{Prop on computation of the range}. By construction we have that
		\begin{equation}
			S_\gam(\tilde{f},\tilde{g})=S_\gam(f,g)-\al S_\gam P_{\m{s}}(u,\eta)-\be S_\gam P_{\m{t}}(u,\eta)=(I-\Uppi_{\gam,\kappa})S_\gam(f,g)\in\bf{X}_{L}^{\m{surf}}.
		\end{equation}
		Hence $\Uppi_{\gam,\kappa}S_\gam(\tilde{f},\tilde{g})=0$. Now by arguing as in the proof of Theorem~\ref{theorem on kernel synthesis}, we deduce $\m{supp}\mathscr{F}[S_\gam(\tilde{f},\tilde{g})]\subseteq(\Z/L_1\times\Z/L_2)\setminus V(\gam,\kappa)$. So indeed $(\tilde{f},\tilde{g})\in\m{ran}P(\gam,\kappa,\cdot,\cdot)$.
		
		It remains to show that the sum of subspaces $\m{ran}P(\gam,\kappa,\cdot,\cdot)+\m{span}P_{\m{s}}(u,\eta)+\m{span}P_{\m{t}}(u,\eta)=\bf{Y}_L$ is direct. So suppose that $(f_0,g_0)\in\m{ran}P(\gam,\kappa,\cdot,\cdot)$ and $(\al_0,\be_0)\in\R^2$ are such that
		\begin{equation}\label{the directness checking equation}
			(f_0,g_0)+\al_0P_{\m{s}}(u,\eta)+\be_0P_{\m{t}}(u,\eta)=0.
		\end{equation}
		We can apply $\Uppi_{\gam,\kappa}S_\gam$ to~\eqref{the directness checking equation}; the contribution by $(f_0,g_0)$ vanishes thanks to the first item of Proposition~\ref{Prop on computation of the range} and so we are left with $\Uppi_{\gam,\kappa}[\al_0 S_\gam P_{\m{s}}(u,\eta)+\be_0S_{\gam}P_{\m{t}}(u,\eta)]=0$. Thanks to the first item of Lemma~\ref{lemma on bases for the kernel of Q} we then learn that $\al_0 S_\gam P_{\m{s}}(u,\eta)+\be_0S_{\gam}P_{\m{t}}(u,\eta)=0$ and then the second item of the same result implies that $\al_0=\be_0=0$. Upon returning to~\eqref{the directness checking equation} we deduce that $(f_0,g_0)=0$. So directness is shown and the proof is complete.
	\end{proof}
	
	\subsection{Synthesis of linear analysis}\label{subsection on synthesis of the linear analysis}
	
	In this final subsection on linear analysis we tie together our results of Subsections~\ref{subsection on analysis of the kernel} and~\ref{subsection on analysis of the range} in the following theorem.
	
	\begin{thm}[On the operator $P$, I]\label{theorem on the operator P}
		Let $(\gam,\kappa)\in\mathfrak{E}$ and $L=\mathfrak{l}(\gam,\kappa)\in\tp{\R^+}^2$. Regarding the operator $P(\gam,\kappa,\cdot,\cdot):\bf{X}_L\to\bf{Y}_L$ defined in~\eqref{the family of linearized operators P}, the following hold.
		\begin{enumerate}
			\item $\m{dim}\;\m{ker}P(\gam,\kappa,\cdot,\cdot)=2$.
			\item The range of $P(\gam,\kappa,\cdot,\cdot)$ is closed and $\m{codim}\;\m{ran}P(\gam,\kappa,\cdot,\cdot)=2$.
			\item For every $X\in\m{ker}P(\gam,\kappa,\cdot,\cdot)\setminus\tcb{0}$ it holds that
			\begin{equation}
				\bf{Y}_L=\m{ran}P(\gam,\kappa,\cdot,\cdot)\oplus\m{span}D_{1,1}P(\gam,\kappa,\cdot,\cdot)X\oplus\m{span}D_{1,2}P(\gam,\kappa,\cdot,\cdot)X.
			\end{equation}
		\end{enumerate}
	\end{thm}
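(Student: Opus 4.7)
The plan is to assemble this theorem directly from the three main results of the preceding two subsections, since each conclusion is essentially a restatement of work already done. The substantive mathematical effort has been front-loaded into Theorem~\ref{theorem on kernel synthesis} and Propositions~\ref{Prop on computation of the range} and~\ref{prop on complements of the range}; here the task is only to combine these cleanly and identify the derivatives $D_{1,1}P$ and $D_{1,2}P$ with the auxiliary operators $P_{\m{s}}$ and $P_{\m{t}}$.

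For conclusion (1), I would simply cite the first item of Theorem~\ref{theorem on kernel synthesis}, which furnishes the explicit two-element spanning set $\tcb{R_{\gam,\kappa}\upvarphi_+,R_{\gam,\kappa}\upvarphi_-}$ of $\m{ker}P(\gam,\kappa,\cdot,\cdot)$. Linear independence of these two vectors follows from the isomorphism property of $R_{\gam,\kappa}$ established in Proposition~\ref{proposition on the reduction of kernels}, combined with the linear independence of $\upvarphi_\pm$ recorded in Proposition~\ref{proposition on computation of the reduced kernel}. For conclusion (2), closedness of the range is exactly the second item of Proposition~\ref{Prop on computation of the range}, while the codimension-2 assertion is a direct consequence of the direct sum decomposition~\eqref{the direct sum decomposition of the codomain} supplied by Proposition~\ref{prop on complements of the range}.

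For conclusion (3), the key observation already made in the paragraph following~\eqref{the family of linearized operators P} and preceding Lemma~\ref{lemma on bases for the kernel of Q} is that
\begin{equation}
P(\gam,\kappa,u,\eta)=P(0,0,u,\eta)+\gam P_{\m{s}}(u,\eta)+\kappa P_{\m{t}}(u,\eta),
\end{equation}
so that $P$ is affine in $(\gam,\kappa)$ with partial derivatives $D_{1,1}P(\gam,\kappa,\cdot,\cdot)=P_{\m{s}}$ and $D_{1,2}P(\gam,\kappa,\cdot,\cdot)=P_{\m{t}}$ everywhere. Substituting these identifications into the direct sum decomposition of Proposition~\ref{prop on complements of the range} yields precisely the desired splitting for any nonzero $X\in\m{ker}P(\gam,\kappa,\cdot,\cdot)$.

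There is no real obstacle here—the present theorem is purely organizational. The only point requiring any care is ensuring that the algebraic identification $D_{1,1}P=P_{\m{s}}$, $D_{1,2}P=P_{\m{t}}$ is stated explicitly, so that conclusion (3) is seen to coincide verbatim with~\eqref{the direct sum decomposition of the codomain}. Once that is said, the proof reduces to three one-line citations.
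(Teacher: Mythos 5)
Your proposal is correct and follows exactly the same route as the paper's proof: cite Theorem~\ref{theorem on kernel synthesis} for conclusion (1) and Propositions~\ref{Prop on computation of the range} and~\ref{prop on complements of the range} for conclusions (2) and (3), using the identifications $D_{1,1}P=P_{\m{s}}$ and $D_{1,2}P=P_{\m{t}}$ already recorded after~\eqref{the operators Ps and Pt}. The paper's version is terser, but the substance is identical.
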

	\begin{proof}
		The first item is Theorem~\ref{theorem on kernel synthesis}. The second and third items are Propositions~\ref{Prop on computation of the range} and~\ref{prop on complements of the range}.
	\end{proof}
	
	We now give a complementary result to Theorem~\ref{theorem on the operator P} which essentially says the set $\mathfrak{E}\cup\tp{-\mathfrak{E}}$ contains exactly the interesting parameter values for the linearized operator. Note that the next result holds in the larger domain and codomain spaces of~\eqref{larger than life spaces}, i.e. there is no need for second variable parity assumptions.
	
	\begin{thm}[On the operator P, II]\label{second thm on the operator P}
		Suppose that $(\gam,\kappa)\in\R^2$ and let $L\in\tp{\R^+}^2$. If $(\gam,\kappa)\not\in\mathfrak{E}\cup\tp{-\mathfrak{E}}$, then the operator $P(\gam,\kappa,\cdot,\cdot):\Bar{\bf{X}}_L\to\Bar{\bf{Y}}_L$ is an isomorphism.
	\end{thm}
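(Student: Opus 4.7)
The plan is to mirror the reduction and Fourier-symbol analysis of Propositions~\ref{proposition on the reduction of kernels} and~\ref{Prop on computation of the range}, but this time working in $\Bar{\bf{X}}_L$ and $\Bar{\bf{Y}}_L$ (no parity constraints) and exploiting the hypothesis $(\gam,\kappa)\notin\mathfrak{E}\cup(-\mathfrak{E})$ to rule out any zeros of the associated symbol. Given $(f,g)\in\Bar{\bf{Y}}_L$, I apply the Leray projector $\mathbb{P}$ and its complement $I-\mathbb{P}$ to the equation $P(\gam,\kappa,u,\eta)=(f,g)$ exactly as in the derivation of~\eqref{Leray a GoGo}. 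This decouples the system into the scalar equation $Q(\gam,\kappa,\eta)=S_\gam(f,g)$ for the surface together with the recovery formula~\eqref{recovery of the velocity field} expressing $u$ in terms of $(\eta,f,g)$; solving for $(u,\eta)$ thus reduces to inverting $Q(\gam,\kappa,\cdot)$ on $H^3_\bullet(\T^2_L)$.

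The crux is to show that under the assumption $(\gam,\kappa)\notin\mathfrak{E}\cup(-\mathfrak{E})$ the symbol $q_{\gam,\kappa}$ from~\eqref{the q symbol} has no zero in $\R^2\setminus\tcb{0}$. Taking real and imaginary parts of $q_{\gam,\kappa}(\xi)=0$ yields
\begin{equation}
\xi_1\bp{\gam-\kappa+16\pi^2\mu\gam|\xi|^2}=0\quad\text{and}\quad|\xi|^2-\gam^2\xi_1^2+4\pi^2\sig|\xi|^4=0.
\end{equation}
The second equation forces $\xi_1\neq 0$ and $\gam\neq 0$ (otherwise the left side is positive), so the first gives $|\xi|^2=(\kappa/\gam-1)/(16\pi^2\mu)$, which requires $\kappa/\gam>1$, i.e. $\sgn\gam=\sgn\kappa$ and $|\kappa|>|\gam|$. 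Inserting this $|\xi|^2$ back into the second equation and using $\xi_1^2\le|\xi|^2$, one finds $\gam^2\ge 1+(\sig/4\mu)(\kappa/\gam-1)$. Rearranging (and multiplying by $\gam$, using the common sign of $\gam$ and $\kappa$) gives $|\gam|>1$ and $|\kappa|-|\gam|\le(4\mu/\sig)|\gam|(\gam^2-1)$, which is precisely the defining inequality of $\mathfrak{E}$ when $\gam>0$ and of $-\mathfrak{E}$ when $\gam<0$. Contrapositive yields the non-vanishing claim.

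Once $q_{\gam,\kappa}$ is known to be non-vanishing on $\R^2\setminus\tcb{0}$, the Fourier multiplier with symbol $m(\xi)=(4\pi^2|\xi|^2)^2/q_{\gam,\kappa}(\xi)$ is well defined and bounded on $\Z/L_1\times\Z/L_2\setminus\tcb{0}$: it is continuous on $\R^2\setminus\tcb{0}$, tends to $-1/\sig$ at infinity by~\eqref{the large frequency limit}, and vanishes at the origin. Therefore the prescription $\mathscr{F}[\eta](\xi)=m(\xi)\mathscr{F}[S_\gam(f,g)](\xi)$ for $\xi\neq 0$ and $\mathscr{F}[\eta](0)=0$ produces a unique $\eta\in H^3_\bullet(\T^2_L)$ with $\tnorm{\eta}_{H^3}\lesssim\tnorm{(f,g)}_{\Bar{\bf{Y}}_L}$, and then~\eqref{recovery of the velocity field} delivers $u\in H^2(\T^2_L;\R^2)$ with a corresponding bound. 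This establishes surjectivity with bounded right inverse. Injectivity is handled by the same symbol argument: if $P(\gam,\kappa,u,\eta)=0$, the reduction gives $q_{\gam,\kappa}(\xi)\mathscr{F}[\eta](\xi)=0$ for all $\xi$, which forces $\mathscr{F}[\eta]\equiv 0$ (the zero mode is killed by $\eta\in H^3_\bullet$), hence $\eta=0$ and then $u=0$ from~\eqref{recovery of the velocity field}. Combining bijectivity with the explicit bound yields the isomorphism.

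The only real obstacle is the case analysis in paragraph two identifying the zero locus of $q_{\gam,\kappa}$ with $\mathfrak{E}\cup(-\mathfrak{E})$; everything else is a routine adaptation of Propositions~\ref{proposition on the reduction of kernels} and~\ref{Prop on computation of the range} to the larger parity-free spaces, and I do not anticipate any subtleties in the remaining estimates since they are uniform in the lattice spacings $L_1,L_2$.
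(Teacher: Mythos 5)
Your proposal is correct and follows essentially the same route as the paper's proof: both apply the Leray decomposition to reduce inversion of $P(\gam,\kappa,\cdot,\cdot)$ to inversion of the scalar Fourier multiplier $Q(\gam,\kappa,\cdot)$, and both show by a case analysis on $\gam,\kappa$ that the symbol $q_{\gam,\kappa}$ has no nonzero zeros precisely when $(\gam,\kappa)\notin\mathfrak{E}\cup(-\mathfrak{E})$, with the limit~\eqref{the large frequency limit} handling boundedness at high frequencies. Your case analysis is slightly more streamlined (you extract $\kappa/\gam>1$ directly rather than separately disposing of the sign cases), and you work over $\R^2\setminus\{0\}$ rather than merely the lattice, but the substance is the same.
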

	\begin{proof}
		By following the reductions in the proof of Proposition~\ref{Prop on computation of the range}, which do not require parity assumptions, we find that for $(u,\eta)\in\Bar{\bf{X}}_L$ and $(f,g)\in\Bar{\bf{Y}}_L$ the equation $P(\gam,\kappa,u,\eta)=(f,g)$ is equivalent to the satisfaction of equations~\eqref{the isolated eta equation} and~\eqref{recovery of the velocity field}. The latter equation reads $u$ as a bounded linear function of the data and $\eta$, so injectivity and surjectivity of $P(\gam,\kappa,\cdot,\cdot)$ follow as soon as we verify that $Q(\gam,\kappa,\cdot):H^3_\bullet(\T^2_L)\to H^3_\bullet(\T^2_L)$ is an isomorphism. This linear map corresponds to the Fourier multiplication operator $\Z/L_1\times\Z/L_2\ni\xi\mapsto\f{q_{\gam,\kappa}(\xi)}{(4\pi^2|\xi|^2)^2}\in\C$ (with the definition of $q_{\gam,\kappa}$ given in~\eqref{the q symbol}) and so we are tasked with inversion of its symbol on nonzero frequencies. Note that the limit~\eqref{the large frequency limit} shows that there is no issue for frequencies outside of a large enough bounded region. Therefore, we need only verify that $q_{\gam,\kappa}$ has no zeros in the set $\tp{\Z/L_1\times\Z/L_2}\setminus\tcb{0}$.
		
		Let  $Z(\gam,\kappa)=\tcb{\xi\in\tp{\Z/L_1\times\Z/L_2}\setminus\tcb{0}\;:\;q_{\gam,\kappa}(\xi)=0}$. By arguing as in the proof of Proposition~\ref{proposition on computation of the reduced kernel} we readily see that the set $Z(\gam,\kappa)$ does not intersect the line $\tcb{\xi\;:\;\xi_1=0}$ and so an equivalent description is
		\begin{equation}\label{the sets Z}
			Z(\gam,\kappa)=\bcb{\xi\in\tp{\Z/L_1\times\Z/L_2}\setminus\tcb{0}\;:\;\bpm\gam-\kappa+16\pi^2\mu\gam|\xi|^2\\|\xi|^2-\gam^2\xi_1^2+4\pi^2\sig|\xi|^4\epm=0}.
		\end{equation}
		We first show that if $\gam=0$ or $\kappa=0$, then $Z(\gam,\kappa)=\es$. If $\gam=0$ then the second equation in~\eqref{the sets Z} leads to $Z(\gam,\kappa)=\es$. If $\kappa=0$ but $\gam\neq0$, then the first equation in~\eqref{the sets Z} leads to $Z(\gam,\kappa)=\es$.
		
		We next claim that $Z(\gam,\kappa)=\es$ if $\gam$ and $\kappa$ are both non zero and have opposite signs. If there were to exist $\xi\in Z(\gam,\kappa)$ in this case, then the first equation in~\eqref{the sets Z} would imply that $1-\kappa/\gam+16\pi^2\mu|\xi|^2=0$ and so the opposite sign condition forces $|\xi|^2<0$ which is a contradiction.
		
		So $Z(\gam,\kappa)$ is only possibly nonempty if $\gam$ and $\kappa$ have the same sign. It is obvious that $Z(\gam,\kappa)=Z(-\gam,-\kappa)$ and so it suffices to consider what happens if both $\gam$ and $\kappa$ are positive. In this case we can argue as in the proof of Proposition~\ref{proposition on computation of the reduced kernel} to obtain that that $\xi\in Z(\gam,\kappa)$ if and only if the identities
		\begin{equation}\label{intersection of circle with figure 8}
			|\xi|^2=\f{\kappa/\gam-1}{16\pi^2\mu}\quad\text{and}\quad\xi_1^2=\f{1}{\gam^2}|\xi|^2\tp{1+4\pi^2\sig|\xi|^2}
		\end{equation}
		are satisfied. A necessary condition for the satisfaction of~\eqref{intersection of circle with figure 8} is that $|\xi|^2>0$ and $\xi_1^2\le|\xi|^2$ which implies $\gam>1$, $\kappa>\gam$, and $\kappa<\gam+\f{4\mu}{\sig}\gam\tp{\gam^2-1}$. In other words $Z(\gam,\kappa)\neq\es$ implies that $(\gam,\kappa)\in\mathfrak{E}$.
		
		It is thus established that if $(\gam,\kappa)\not\in\mathfrak{E}\cup\tp{-\mathfrak{E}}$, then $Z(\gam,\kappa)=\es$ and therefore the operator $Q(\gam,\kappa,\cdot):H^3_\bullet(\T^2_L)\to H^3_\bullet(\T^2_L)$ is an isomorphism and so too is $P(\gam,\kappa,\cdot,\cdot):\Bar{\bf{X}}_L\to\Bar{\bf{Y}}_L$.
	\end{proof}
	
	\section{Nonlinear Analysis}\label{section on nonlinear analysis}
	
	The purpose of this section of the document is to derive an abstract bifurcation theorem and then to use this tool to construct and classify all small amplitude solutions to~\eqref{traveling form of the equations} in our functional framework.
	
	\subsection{Abstract bifurcation}\label{subsection on abstract bifurcation}
	
	The derivation of our main tool which allows us to find all small amplitude gravity-capillary roll wave solutions to the inclined viscous shallow water system is the content of this subsection. The bifurcation theorem presented here for operators with multiple parameters, which will certainly be no surprise to the expert reader, is based off of Theorem I.19.6 in Kielh\"ofer~\cite{MR2859263}. The main difference is that our result adds a classification of the totality of small solutions by appending an argument mimicking that of Lemma 1.12 in Crandall and Rabinowitz~\cite{MR0288640}.
	\begin{thm}[Multiparameter bifurcation]\label{theorem on multiparameter bifurcation}
		Suppose that $X$ and $Y$ are real Banach spaces, $n\in\N^+$, $U\subset \R^n\times X$ is open, $f:U\to Y$ is smooth, and $(\lambda_\star,0)\in U$ are such that:
		\begin{enumerate}
			\item  $f(\lambda_\star+\lambda,0)=0$ whenever $(\lambda_\star+\lambda,0)\in U$.
			\item The linear map $D_2f(\lambda_\star,0):X\to Y$ has closed range and satisfies $\m{dim}E_\star=n$ and $\m{codim}F_\star=n$ where $E_\star=\m{ker}D_2f(\lambda_\star,0)\subset X$ and $F_\star=\m{ran}D_2f(\lambda_\star,0)\subset Y$. Here $D_2f(\lambda_\star,0)$ is the derivative with respect to the $X$ factor.
			\item The open set
			\begin{equation}\label{the direct sum set}
				A_\star=\tcb{v\in E_\star\setminus\tcb{0}\;:\;Y=F_\star\oplus\m{span}D_{1,1}D_2f(\lambda_\star,0)v\oplus\cdots\oplus\m{span} D_{1,n}D_2f(\lambda_\star,0)v}.
			\end{equation}
			is nonempty. Here $D_{1}=\tp{D_{1,1},\dots,D_{1,n}}$ is the derivative with respect to the $\R^n$ factor broken into the coordinate partials.
		\end{enumerate}
		If $Z\subset X$ is any subspace such that $E_\star\oplus Z=X$ and $\es\neq K\subset A_\star$ is compact, then the following hold.
		\begin{enumerate}
			\item There exists $\ep,r\in\R^+$ and smooth functions $\Lambda:[-\ep,\ep]\times K\to B_{\R^n}(0,r)$, $W:[-\ep,\ep]\times K\to Z\cap B_{X}(0,r)$ satisfying for all $(\al,v)\in[-\ep,\ep]\times K$:
			\begin{equation}\label{what are these functions really doing}
				\begin{cases}
					\Lambda(0,v)=0,\\
					W(0,v)=0,
				\end{cases}\quad\text{and}\quad f(\lambda_\star+\Lambda(\al,v),\al\tp{v+W(\al,v)})=0.
			\end{equation}
			\item If $(\lambda,z)\in B_{\R^n}(0,r)\times\tp{Z\cap B_X(0,r)}$ are such that for some $(\al,v)\in[-\ep,\ep]\times K$ with $\al\neq0$ we have that $f(\lambda_\star+\lambda,\al(v+z))=0$, then we have that $\lambda=\Lambda(\al,v)$ and $z=W(\al,v)$.
			\item Assume additionally that $K\supseteq\tcb{v\in E_\star\;:\;\tnorm{v}_X=1}$. Then there exists an open set $(\lambda_\star,0)\in U_1\subseteq U$ with the property that
			\begin{equation}\label{totality of local solutions}
				\tp{f^{-1}\tcb{0}}\cap U_1=\tsb{\tcb{(\lambda,0)\;:\;\lambda\in\R}\cup\tcb{(\lambda_\star+\Lambda(\al,v),\al\tp{v+W(\al,v)})\;:\;(\al,v)\in[-\ep,\ep]\times K}}\cap U_1.
			\end{equation}
		\end{enumerate}
	\end{thm}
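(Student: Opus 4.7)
The plan is a Lyapunov--Schmidt-type reduction combined with a scaling ansatz, in the spirit of Theorem I.19.6 of Kielh\"ofer and Lemma 1.12 of Crandall--Rabinowitz. The trivial branch hypothesis $f(\lambda_\star + \lambda, 0) = 0$ allows me to factor one power of amplitude out of the rescaled equation $f(\lambda_\star + \lambda, \al(v+w)) = 0$: by the fundamental theorem of calculus, this is, for $\al \neq 0$, equivalent to $F(\lambda, \al, v, w) = 0$ where
\begin{equation*}
F(\lambda, \al, v, w) = \int_0^1 D_2 f\tp{\lambda_\star + \lambda, t\al(v+w)}(v+w) \, \m{d}t.
\end{equation*}
This $F$ is smooth on a neighborhood of $\tcb{0}\times\tcb{0}\times E_\star\times\tcb{0}$ in $\R^n \times \R \times E_\star \times Z$, extends smoothly across $\al = 0$, and satisfies $F(0, 0, v, 0) = D_2 f(\lambda_\star, 0) v = 0$ for every $v \in E_\star$. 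Thus each $(0, 0, v, 0)$ with $v \in K$ serves as a base point for an implicit function theorem argument.

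Differentiating at such a base point under the integral sign yields
\begin{equation*}
D_{(\lambda, w)} F(0, 0, v, 0)(\mu, \zeta) = \sum_{i=1}^n \mu_i D_{1,i} D_2 f(\lambda_\star, 0) v + D_2 f(\lambda_\star, 0) \zeta.
\end{equation*}
Hypothesis (2) combined with the open mapping theorem gives that $D_2 f(\lambda_\star, 0)|_Z: Z \to F_\star$ is a Banach space isomorphism; the direct sum in the definition of $A_\star$ (hypothesis (3)) then promotes the whole expression to an isomorphism $\R^n \times Z \to Y$. The IFT therefore produces, for each $v \in K$, a neighborhood of $(0, v)$ on which smooth local solutions $\Lambda_v, W_v$ of $F = 0$ exist. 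Covering the compact set $K$ by finitely many such neighborhoods and patching via IFT uniqueness yields uniform $\ep, r > 0$ and globally defined smooth $\Lambda: [-\ep, \ep] \times K \to B_{\R^n}(0, r)$ and $W: [-\ep, \ep] \times K \to Z \cap B_X(0, r)$ verifying~\eqref{what are these functions really doing}; this is conclusion (1), and conclusion (2) follows immediately from IFT uniqueness after using $\al \neq 0$ to convert $f = 0$ to $F = 0$.

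The main obstacle is conclusion (3): the scaling ansatz $\bar x = \al(v+z)$ with $v \in K$ is incapable of representing solutions whose $E_\star$-component vanishes, so these must be shown separately to be trivial. Given a small solution $(\lambda_\star + \bar\lambda, \bar x)$, I would decompose $\bar x = \bar v + \bar z$ with $\bar v \in E_\star$ and $\bar z \in Z$. If $\bar v \neq 0$, setting $\al = \tnorm{\bar v}_X$ and $v = \bar v/\al$ places $v$ on the unit sphere of $E_\star$, hence in $K$ by hypothesis, and conclusion (2) identifies $(\bar\lambda, \bar x)$ with a point on the constructed branch. The genuinely new case is $\bar v = 0$, where the identity
\begin{equation*}
0 = f(\lambda_\star + \bar \lambda, \bar z) = \p{\int_0^1 D_2 f(\lambda_\star + \bar \lambda, t \bar z) \, \m{d}t} \bar z
\end{equation*}
exhibits $\bar z$ as a vector in the kernel of an operator close in $\mathcal{L}(X, Y)$-norm to $D_2 f(\lambda_\star, 0)$. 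Fixing any continuous projection $\Pi: Y \to F_\star$ along a closed complement, the composition $\Pi D_2 f(\lambda_\star, 0)|_Z$ coincides with $D_2 f(\lambda_\star, 0)|_Z$ and is therefore bounded below; a standard perturbation estimate preserves this property for sufficiently small $(\bar \lambda, \bar z)$, forcing $\bar z = 0$. Selecting $U_1$ small enough to enforce all of these smallness constraints simultaneously then gives the complete classification~\eqref{totality of local solutions}.
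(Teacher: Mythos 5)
Your treatment of conclusions (1) and (2) essentially matches the paper's: the same rescaled map (yours written as an integral, the paper's as a case split with a subsequent Taylor expansion to verify smoothness), the same computation showing $D_{(\lambda,w)}F(0,0,v,0)$ is an isomorphism $\R^n\times Z\to Y$, and the same cover-and-patch argument over the compact set $K$. Your perturbation argument for the $\bar v=0$ case of conclusion (3) --- showing that $\Pi A|_Z$ inherits a lower bound from $D_2f(\lambda_\star,0)|_Z$ and hence forces $\bar z=0$ --- is a correct and somewhat cleaner alternative to the paper's quantitative estimate.

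There is, however, a genuine gap in your $\bar v\neq 0$ case. To invoke conclusion (2) you must write $\bar x=\al(v+z)$ with $\al=\tnorm{\bar v}_X$, $v=\bar v/\al$, and $z=\bar z/\al$, and conclusion (2) requires $z\in Z\cap B_X(0,r)$, i.e. $\tnorm{\bar z}_X<r\tnorm{\bar v}_X$. This is not obtained merely by shrinking $U_1$: both $\tnorm{\bar v}_X$ and $\tnorm{\bar z}_X$ are small there, but their ratio is a priori uncontrolled. This is precisely the difficulty that the paper's ``second sub-claim'' is designed to dispatch: it proves, via a four-term Taylor decomposition of $f$ at $(\lambda_\star,0)$ combined with the uniform (over $K$) lower bound on $D_2F(0,v,0,0)$, that any small solution satisfies $\tnorm{\bar z}_X\le c\tnorm{\bar v}_X^2$ together with a companion bound on $|\bar\lambda|$, guaranteeing that $(\bar\lambda,\bar z/\al)$ lands inside $B_{\R^n}(0,r)\times(Z\cap B_X(0,r))$. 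Your proposal simply asserts that conclusion (2) applies without producing this estimate. Note that the projection device you already deployed for $\bar v=0$ can in fact be adapted to close the gap: from $0=\Pi A(\bar v+\bar z)$ with $A=\int_0^1 D_2f(\lambda_\star+\bar\lambda,t\bar x)\,\m{d}t$, the lower bound on $\Pi A|_Z$ and the fact that $\Pi D_2f(\lambda_\star,0)\bar v=0$ give $\tnorm{\bar z}_X\lesssim\tnorm{(A-D_2f(\lambda_\star,0))\bar v}_Y\lesssim(|\bar\lambda|+\tnorm{\bar x}_X)\tnorm{\bar v}_X$, which after absorbing the $\tnorm{\bar z}_X\tnorm{\bar v}_X$ contribution yields $\tnorm{\bar z}_X<r\tnorm{\bar v}_X$ for $U_1$ small enough --- but this step needs to be written out rather than implied.
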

	\begin{proof}
		Let 
		\begin{equation}
			V=\tcb{\tp{\al,v,\lambda,z}\in\R\times A_\star\times\R^n\times Z\;:\;(\lambda_\star+\lambda,\al(v+z))\in U)}.
		\end{equation}
		Observe that $\tcb{0}\times A_\star\times\tcb{0}\times\tcb{0}\subset V$. Consider the mapping $ F:V\subset \R\times A_\star\times \R^n\times Z\to Y$ defined via
		\begin{equation}
			F(\al,v,\lambda,z)=\begin{cases}
				\al^{-1}f(\lambda_\star+\lambda,\al(v+z))&\text{if }\al\neq0,\\
				D_2f(\lambda_\star+\lambda,0)(v+z)&\text{if }\al=0.
			\end{cases}
		\end{equation}
		Notice that the first hypothesis on $f$ permits us to Taylor expand for $\al\neq 0$ and write
		\begin{multline}
			\al^{-1}f(\lambda_\star+\lambda,\al(v+z))=\al^{-1}\tp{f(\lambda_\star+\lambda,\al(v+z))-f(\lambda_\star+\lambda,0)}\\=D_2f(\lambda_\star+\lambda,0)(v+z)+\al\int_0^1(1-\tau)D_2^2f(\lambda_\star+\lambda,\tau\al(v+z))(v+z)^{\otimes 2}\;\m{d}\tau.
		\end{multline}
		From this one then readily deduces that $F$ is smooth. We also observe that for any $v\in A_\star\subset E_\star$ it holds that $F(0,v,0,0)=0$.
		
		Now consider the domain of $F$ as grouped into the product of the factors $X_0=\R\times A_\star$ and $X_1=\R^n\times Z$ so that $V\subset X_0\times X_1$. We would like to establish that for $v\in A_\star$ the derivative $D_2F(0,v,0,0):X_1\to Y$ is an isomorphism of Banach spaces. We compute that for $(\Bar{\lambda},\Bar{z})\in X_1$ it holds
		\begin{equation}\label{the derivative of the auxilary mapping}
			D_2F(0,v,0,0)(\Bar{\lambda},\Bar{z})=D_2f(\lambda_\star,0)\Bar{z}+\sum_{j=1}^n\Bar{\lambda}_jD_{1,j}D_2f(\lambda_\star,0)v.
		\end{equation}
		Our hypotheses ensure that this map is an isomorphism. Indeed, by the definition of $A_\star$ and the fact that $v\in A_\star$, $D_2f(\lambda_\star,0)\Bar{z}\in F_\star$ it follows that for any $y\in Y$ the identity
		\begin{equation}
			D_2f(\lambda_\star,0)\Bar{z}+\sum_{j=1}^n\Bar{\lambda}_jD_{1,j}D_2f(\lambda_\star)v=y
		\end{equation}
		is equivalent to
		\begin{equation}\label{direct sum decomposition}
			D_2f(\lambda_\star,0)\Bar{z}=y_0,\;\forall\;j\in\tcb{1,\dots,n}\;\Bar{\lambda}_jD_{1,j}D_2f(\lambda_\star,0)v=y_j
		\end{equation}
		where $y=y_0+y_1+\cdots+y_n$ is the unique decomposition with $y_0\in F_\star$ and for $j\in\tcb{1,\dots,n}$ $y_j\in\m{span}D_{1,j}D_2f(\lambda_\star,0)v$. So if $y=0$, then $\Bar{\lambda}=0$ and $z\in Z\cap E_\star=\tcb{0}$, hence $D_2F(0,v,0,0)$ is injective. On the other hand, for any value of $y$ we can find $\Bar{z}\in Z$ and $\Bar{\lambda}\in\R^n$ such that equation~\eqref{direct sum decomposition} is satisfied; therefore surjectivity is too established.
		
		The hypotheses of the implicit function theorem are satisfied for the map $F$ at the point $(0,v,0,0)$. We are therefore granted radii $r^0_v,r^1_v\in\R^+$ with the property that $B_{X_0}((0,v),r^0_v)\times B_{X_1}((0,0),r^1_v)\subset V$ and we are also granted a unique function with $\upiota_v(0,v)=0$,
		\begin{equation}
			\upiota_v:B_{X_0}((0,v),r_v^0)\to B_{X_1}((0,0),r_v^1),\quad F(\al,w,\upiota_v(\al,w))=0,\;\forall\;(\al,w)\in B_{X_0}((0,v),r_v^0).
		\end{equation}
		Moreover the functions $\upiota_v$ are smooth.
		
		We now would like to glue together these maps. Notice that if $v,\Bar{v}\in A_\star$ are such that $B_{X_0}((0,v),r^0_v)\cap B_{X_0}((0,\Bar{v}),r_{\Bar{v}}^0)\neq\es$ then, by uniqueness, we must have that $\upiota_v=\upiota_{\Bar{v}}$ on this intersection. This allows us to define the smooth map
		\begin{equation}
			\upiota:\bigcup_{v\in A_\star} B_{X_0}((0,v),r_v^0)\to X_1,\quad\upiota=\upiota_v\text{ on }B_{X_0}((0,v),r^0_v).
		\end{equation}
		
		Now consider a compact subset $K\subset A_\star$. The family of balls $\tcb{B_{X_0}((0,v),r_v^0)}_{v\in K}$ form a cover of $\tcb{0}\times K\subset X_0$ and so there is a finite subcover, say indexed by $\tcb{v_1,\dots,v_m}\subset K$ for some $m\in\N^+$. Let us set $\rho_K=\f12\min\tcb{r^1_{v_1},\dots, r^1_{v_m}}\in\R^+$. The set
		\begin{equation}
			\upiota^{-1}(B_{X_1}((0,0),\rho_K))\subset \bigcup_{v\in A_\star} B_{X_0}((0,v),r_v^0) 
		\end{equation}
		is open and $\tcb{0}\times K\subset\upiota^{-1}\tp{B_{X_1}((0,0),\rho_K)}$. Thanks to compactness again, there exists a $\del_K\in\R^+$ with the property that $[-\del_K,\del_K]\times K\subset\upiota^{-1}\tp{B_{X_1}((0,0),\rho_K)}$. In other words, we can view $\upiota:[-\del_K,\del_K]\times K\to B_{X_1}((0,0),\rho_K)$ as a smooth map with the property that $\upiota(0,v)=0$ for all $v\in K$ and $F(\al,v,\upiota(\al,w))=0$ for all $(\al,w)\in[-\del_K,\del_K]\times K$. This restriction of $\upiota$ is the unique function with these two properties. To obtain the first and second conclusions of the theorem we need only to take $\Lambda$ and $W$ to be the components of the map $\upiota$.
		
		We now turn our attention to the third conclusion of the theorem, whose proof will be split into the series of sub-claims. The first sub-claim that will be established is as follows: There exists a constant $C\in\R^+$ such that for all $v\in K$ and all $(\lambda,z)\in\R^n\times Z$ we have
		\begin{equation}\label{the equivalence of lengths}
			C^{-1}\tp{\tnorm{z}_X+|\lambda|}\le\bnorm{D_2f(\lambda_\star,0)z+\sum_{j=1}^n\lambda_jD_{1,j}D_2f(\lambda_\star,0)v}_Y\le C\tp{\tnorm{z}_X+|\lambda|}.
		\end{equation}
		Notice that the map in the center of inequality~\eqref{the equivalence of lengths} is $D_2F(0,v,0,0)(\lambda,z)$ (see~\eqref{the derivative of the auxilary mapping}); earlier in the proof we have established that $D_2F(0,v,0,0)$ is an isomorphism $\R^n\times Z\to Y$. Therefore there exists a constant $C_v\in\R^+$ such that~\eqref{the equivalence of lengths} holds with $C$ replaced by $C_v$ at a fixed $v\in K$ and for all $(\lambda,z)\in\R^n\times Z$. By a simple absorption argument, we find that there actually exists $\upepsilon_v\in\R^+$ such that for all $\tilde{v}\in B(v,\upepsilon_v)\cap K$ and all $(\lambda,z)\in\R^n\times Z$ it holds that
		\begin{equation}
			\tp{2C_{v}}^{-1}\tp{\tnorm{z}+|\lambda|}\le\tnorm{D_2F(0,\tilde{v},0,0)(\lambda,z)}_Y\le 2C_v\tp{\tnorm{z}_X+|\lambda|}.
		\end{equation}
		The collection of open balls $\tcb{B(v,\upepsilon_v)}_{v\in K}$ is cover for the compact set $K$ and so we may extract a finite collection $\tcb{v_\ell}_{\ell=1}^p\subseteq K$ such that $K\subset\bigcup_{\ell=1}^pB(v_\ell,\upepsilon_{v_\ell})$; hence upon setting $C=2\max_{1\le \ell\le p}C_{v_\ell}$ we deduce the desired equivalence of~\eqref{the equivalence of lengths}.
		
		The second sub-claim is that there exists an open set $(\lambda_\star,0)\in U_1\subseteq U$ and $c\in\R^+$ such that for all $(\al,v)\in[-\ep,\ep]\times K$ and $(\lambda,z)\in \R^n\times Z$ satisfying $(\lambda_\star+\lambda,\al v+z)\in U_1$ and $f(\lambda_\star+\lambda,\al v+z)=0$ we have that
		\begin{equation}\label{the inequality of the second subclaim}
			|\al\lambda|+\tnorm{z}_X\le c|\al|^2\quad\text{and}\quad|\al|\le r/2c,
		\end{equation}
		where $r\in\R^+$ is the radius granted by the first conclusion of the theorem.
		
		To prove this claim we fix $\kappa\in\R^+$ sufficiently small suppose initially that $(\lambda,z)\in B(0,\kappa)\subset\R^n\times Z$, $(\al,v)\in[-\ep,\ep]\times K$, $\al v\in B(0,\kappa)\subset X$. are such that $f(\lambda_\star+\lambda,\al v+z)=0$. Now we consider
		\begin{equation}
			0=f(\lambda_\star+\lambda,\al v+z)=\tsb{f(\lambda_\star+\lambda,\al v+z)-f(\lambda_\star+\lambda,\al v)}+\tsb{f(\lambda_\star+\lambda,\al v)-f(\lambda_\star+\lambda,0)}=\bf{I}+\bf{II}.
		\end{equation}
		We now write
		\begin{multline}
			\bf{I}=\tsb{f(\lambda_\star+\lambda,\al v+z)-f(\lambda_\star+\lambda,\al v)-D_2f(\lambda_\star+\lambda,\al v)z}\\+\tsb{D_2f(\lambda_\star+\lambda,\al v)-D_2f(\lambda_\star,0)}z+D_2f(\lambda_\star,0)z=\bf{I}_1+\bf{I}_2+D_2f(\lambda_\star,0)z
		\end{multline}
		and
		\begin{multline}
			\bf{II}=\tsb{f(\lambda_\star+\lambda,\al v)-f(\lambda_\star+\lambda,0)-D_2f(\lambda_\star+\lambda,0)\al v}\\+\tsb{D_2f(\lambda_\star+\lambda,0)-D_2f(\lambda_\star,0)-\lambda\cdot D_1D_2 f(\lambda_\star,0)}\al v+\sum_{j=1}^n\lambda_jD_{1,j}D_2f(\lambda_\star,0)\al v\\=\bf{II}_1+\bf{II}_2+\sum_{j=1}^n\lambda_jD_{1,j}D_2f(\lambda_\star,0)\al v.
		\end{multline}
		The terms $\bf{I}_1$, $\bf{I}_2$, $\bf{II}_1$ and $\bf{II}_2$ can be estimated as follows
		\begin{multline}\label{estimates on the remainder terms}
			\tnorm{\bf{I}_1}_Y\lesssim\tnorm{z}_X^2\lesssim\kappa\tnorm{z}_X,\;\tnorm{\bf{I}_2}_{Y}\lesssim\tp{|\lambda|+|\al v|}\tnorm{z}_X\lesssim\kappa\tnorm{z}_X,\\\tnorm{\bf{II}_1}_{Y}\lesssim\tnorm{\al v}^2_X\lesssim|\al|^2,\;\tnorm{\bf{II}_2}_{Y}\lesssim|\lambda|^2\tnorm{\al v}_{X}\lesssim\kappa|\al\lambda|,
		\end{multline}
		where the implicit constants depend on the second and third derivatives of the function $f$. Now we have that $0=\bf{I}+\bf{II}=D_2F(0,v,0,0)(\al\lambda,z)+\bf{I}_1+\bf{I}_2+\bf{II}_1+\bf{II}_2$ and so we may estimate according to the first claim~\eqref{the equivalence of lengths} that
		\begin{equation}\label{we are nearly there}
			|\al\lambda|+\tnorm{z}_X\lesssim\tnorm{D_2F(0,v,0,0)\tp{\al\lambda,z}}_Y=\tnorm{\bf{I}_1+\bf{I}_2+\bf{II}_1+\bf{II}_2}_Y\lesssim\kappa\tp{|\al\lambda|+\tnorm{z}_X}+\al^2.
		\end{equation}
		So by taking $\kappa>0$ sufficiently small we can absorb in~\eqref{we are nearly there} and acquire the desired bound~\eqref{the inequality of the second subclaim}. The open set $U_1$ can then be taken to be product of $B_{\R^n}(\lambda_\star,\kappa)$ and the preimage of $B(0,\kappa)\times B(0,\kappa)$ under the direct sum identification map $X=E_\star\oplus Z\to E_\star\times Z$. By taking $\kappa$ smaller (say $\kappa\le \ep$), if necessary, we can also ensure that
		\begin{equation}\label{inclusion of a ball!}
			\tcb{\al v\in E_\star\;:\;(\al,v)\in[-\ep,\ep]\times K}\supseteq B_{E_\star}(0,\kappa).
		\end{equation}
		The inclusion~\eqref{inclusion of a ball!} is where we are using the hypothesis $K\subseteq\tcb{v\in E_\star\;:\;\tnorm{v}_X=1}$. Finally, by making $\kappa$ smaller again, if needed, we can also guarantee that if $\al v\in B(0,\kappa)$ for some $(\al,v)\in[-\ep,\ep]\times K$, then $|\al|\le r/2c$, thanks to the fact that the set $K$ is a positive distance from $0$.
		
		We are now ready to prove the third item. It is clear that the inclusion `$\supseteq$' in~\eqref{totality of local solutions} holds, so we need only focus on justifying  `$\subseteq$'. So let us assume that $f(\lambda_\star+\lambda,x)=0$ with $(\lambda_\star+\lambda,x)\in U_1$ with $U_1$ as constructed in the proof of the second sub-claim. We decompose $x=w+z$ with $w\in E_\star$ and $z\in Z$ according to the direct sum $X=E_\star\oplus Z$. As inclusion~\eqref{inclusion of a ball!} holds, we may find $(\al,v)\in[-\ep,\ep]\times K$ such that $w=\al v$. We have now reached a situation in which the second sub-claim applies and we deduce that inequality~\eqref{the inequality of the second subclaim} holds for our $\lambda$ and $z$.
		
		If $\al=0$, then we deduce that $z=0$ and the solution belongs to the set of trivial solutions $U_1\cap\tcb{(\uplambda,0)\;:\;\uplambda\in\R}$. On the other hand, if $\al\neq 0$, we deduce the inequalities: $|\lambda|+\tnorm{z/\al}_{X}\le c|\al|\le r/2$. Hence we are in a position to apply the uniqueness assertion of the second item and deduce that $\lambda=\Lambda(\al,v)$ and $z/\al=W(\al,v)$.
	\end{proof}
	
	\subsection{Analysis of the shallow water system's nonlinearities}\label{subsection on concrete nonlinear analysis}
	
	We cast system~\eqref{traveling form of the equations} as a nonlinear operator equation with two parameters and then check that certain mapping properties are satisfied.
	
	Recall the definitions of the spaces $\Bar{\bf{X}}_L$, $\Bar{\bf{Y}}_L$ from~\eqref{larger than life spaces} and $\bf{X}_L$, $\bf{Y}_L$ from~\eqref{the domain spaces} and~\eqref{the codomain spaces}. For any $L\in\tp{\R^+}^2$ we define the map $N:\R^2\times\Bar{\bf{X}}_L\to\Bar{\bf{Y}}_L$ via
	\begin{equation}\label{definition of the nonlinearity}
		N(\gam,\kappa,u,\eta)=\bpm(1+\eta)u\cdot\grad u-\gam\eta\pd_1 u-\mu\grad\cdot(\eta\mathbb{S}u)+\eta\grad(1-\sig\Delta)\eta\\\grad\cdot(\eta u)\epm.
	\end{equation}
	
	\begin{prop}[Analysis of the nonlinearity]\label{proposition on analysis of the nonlinearity}
		For $L\in\tp{\R^+}^2$ the following hold.
		\begin{enumerate}
			\item The mapping $N$ from~\eqref{definition of the nonlinearity} is well-defined and smooth.
			\item $N$ is parity preserving in the sense that $N:\R^2\times\bf{X}_L\to\bf{Y}_L$.
			\item $N$ is purely nonlinear in the sense that $D_2N(\gam,\kappa,0,0)=0$ for all $(\gam,\kappa)\in\R^2$ where $D_2$ refers to the derivative with respect to the $\Bar{\bf{X}}_L$ factor of the domain.
			\item A tuple $(\gam,\kappa,u,\eta)\in\R^2\times\Bar{\bf{X}}_L$ is a solution to system~\eqref{traveling form of the equations} if and only if $(P+N)(\gam,\kappa,u,\eta)=0$, where $P$ is the family of linear maps defined in~\eqref{the family of linearized operators P}.
		\end{enumerate}
	\end{prop}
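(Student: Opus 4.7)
My plan is to treat item 4 as an algebraic identification obtained by expanding the traveling system and collecting terms by degree in $(u,\eta)$, and then to obtain items 1--3 as straightforward verifications directly from the explicit formula for $N$. The hardest bookkeeping sits in item 2 (the parity check for the viscous term), but there is no real mathematical obstacle anywhere.

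Concretely, for item 4 I would expand $(1+\eta)(u-\gam e_1)\cdot\grad u = -\gam\pd_1 u + (1+\eta)u\cdot\grad u - \gam\eta\pd_1 u$, $\grad\cdot((1+\eta)\mathbb{S}u) = \grad\cdot\mathbb{S}u + \grad\cdot(\eta\mathbb{S}u)$, $(1+\eta)\grad(1-\sig\Delta)\eta = \grad(1-\sig\Delta)\eta + \eta\grad(1-\sig\Delta)\eta$, and $\grad\cdot((1+\eta)u) = \grad\cdot u + \grad\cdot(\eta u)$ inside~\eqref{traveling form of the equations}. Matching the purely linear terms against the definition~\eqref{the family of linearized operators P} of $P$ and the remaining quadratic/cubic terms against~\eqref{definition of the nonlinearity} gives the identity $(P+N)(\gam,\kappa,u,\eta)=0$. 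Item 3 drops out for free, since every monomial appearing in $N$ has total degree at least two in $(u,\eta)$ and hence $D_2N(\gam,\kappa,0,0)=0$.

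For item 1 I would invoke the algebra property of $H^2(\T^2_L)$ in two dimensions (furnished by the embedding $H^2\hookrightarrow L^\infty$) together with the bilinear estimates $\tnorm{fg}_{H^k} \lesssim \tnorm{f}_{H^2}\tnorm{g}_{H^k}$ for $k\in\tcb{0,1}$. A term-by-term check of $N$ then shows that the worst regularity in the momentum block is $H^0$ (for example $\eta\grad(1-\sig\Delta)\eta$ pairs an $L^\infty$ factor with an $L^2$ factor) and $H^1$ in the continuity block; moreover $\grad\cdot(\eta u)$ integrates to zero over the torus by the divergence theorem, so it lies in the vanishing-average subspace as required by the definition of $\bar{\bf{Y}}_L$. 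Smoothness as a Banach-space map then follows because $N$ is a finite sum of continuous multilinear maps in $(\gam,u,\eta)$, with no dependence on $\kappa$.

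Item 2 is the only place requiring care. I would tabulate parities in the $x_2$ variable: $u_1$ and $\eta$ are even, $u_2$ is odd, $\pd_1$ preserves parity, and $\pd_2$ reverses it. The components of $(1+\eta)u\cdot\grad u$, $\gam\eta\pd_1 u$, and $\eta\grad(1-\sig\Delta)\eta$ then inherit the (even, odd) pattern immediately. The viscous term is the least obvious: $\mathbb{S}u$ has even diagonal entries and odd off-diagonals, so $\eta\mathbb{S}u$ does too, and a quick computation shows that its row-divergence is even in the first component and odd in the second. Finally $\grad\cdot(\eta u)$ is even, matching $\bf{Y}_L^{\m{cont}}$. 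This finishes the verification, and the only place where mild care is needed is precisely this parity accounting for $\grad\cdot(\eta\mathbb{S}u)$.
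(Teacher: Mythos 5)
Your proposal is correct and follows essentially the same route as the paper: item 4 by expanding~\eqref{traveling form of the equations} and matching linear terms to $P$ and higher-degree terms to $N$, item 3 by observing that every monomial in $N$ is at least quadratic, item 1 by Sobolev product estimates (the paper invokes the embeddings $H^2\hookrightarrow L^\infty$ and $H^3\hookrightarrow W^{1,\infty}$ term by term, which is the same content as your bilinear bounds), and item 2 by parity bookkeeping. The paper dismisses the parity check as ``rote'' and does not spell it out; you usefully make it explicit, and you also flag that $\grad\cdot(\eta u)$ has vanishing mean by the divergence theorem on the torus, a point required by the definition of $\bar{\bf{Y}}_L$ that the paper leaves implicit.
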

	\begin{proof}
		The mapping $N$ is the sum of bilinear and trilinear mappings. Therefore, to establish the first and second items, we need only verify that each constituent multilinearity of $N$ is a bounded mapping $\R^2\times \Bar{\bf{X}}_L\to \Bar{\bf{Y}}_L$ and that the defining symmetries of $\bf{X}_L$ and $\bf{Y}_L$ are respected. This latter point is rote checking, so let us only prove the former point. For the first component of $N$ we need only look the embedding maps $H^2\emb L^\infty$ and $H^3\emb W^{1,\infty}$ along with the elementary inequalities
		\begin{equation}
			\tnorm{(1+\eta)u\cdot\grad u}_{L_2}\le\tnorm{1+\eta}_{L^\infty}\tnorm{u}_{L^\infty}\tnorm{\grad u}_{L^2}\lesssim\tbr{\tnorm{\eta}_{H^3}}\tnorm{u}_{H^2}^2,
		\end{equation}
		\begin{equation}
			\tnorm{\eta\pd_1 u}_{L^2}\le\tnorm{\eta}_{L^\infty}\tnorm{\grad u}_{L^2}\lesssim\tnorm{\eta}_{H^3}\tnorm{u}_{H^2},
		\end{equation}
		\begin{equation}
			\tnorm{\grad\cdot\tp{\eta\mathbb{S}u}}_{L^2}\lesssim\tnorm{\eta\mathbb{S}u}_{H^1}\lesssim\tnorm{\eta}_{W^{1,\infty}}\tnorm{\mathbb{S}u}_{H^1}\lesssim\tnorm{\eta}_{H^3}\tnorm{u}_{H^2},
		\end{equation}
		and
		\begin{equation}
			\tnorm{\eta\grad\tp{1-\sig\Delta}\eta}_{L^2}\lesssim\tnorm{\eta}_{L^\infty}\tnorm{\eta}_{H^3}\lesssim\tnorm{\eta}_{H^3}^2.
		\end{equation}
		While for the second component of $N$ we shall use that $H^2(\R^2)$ is an algebra
		\begin{equation}
			\tnorm{\grad\cdot(\eta u)}_{H^1}\lesssim\tnorm{\eta u}_{H^2}\lesssim\tnorm{\eta}_{H^3}\tnorm{u}_{H^2}.
		\end{equation}
		So the first item is established. The third and fourth items are immediate.
	\end{proof}

	\subsection{Conclusions}\label{subsection on conclusions}
	
	In this final subsection of the document we bring together our linear analysis of Section~\ref{section on linear analysis} with the nonlinear analysis of Section~\ref{subsection on concrete nonlinear analysis} and with either the inverse function theorem or local bifurcation theorem of Section~\ref{theorem on multiparameter bifurcation}. We are studying solutions to the system of equations~\eqref{traveling form of the equations} which, thanks to the third item of Proposition~\ref{proposition on analysis of the nonlinearity}, are neatly encoded as zeros of the function $P+N$. Recall that the set $\mathfrak{E}\subset\tp{1,\infty}^2$ is defined in~\eqref{the set of interesting parameter values}.
	
	Our first result here, which is Theorem~\ref{3rd main theorem} of Section~\ref{subsection on main results and discussion}, regards the nonexistence of small amplitude periodic gravity-capillary roll wave solutions for certain combinations of wave speed and tilt.
	
	\begin{proof}[Proof of Theorem~\ref{3rd main theorem}]
		This is a simple consequence of the implicit function theorem. $P$ as defined in~\eqref{the family of linearized operators P} is a family of bounded linear maps while $N$ is smooth and satisfies $D_2N(\cdot,\cdot,0,0)=0$ thanks to Proposition~\ref{proposition on analysis of the nonlinearity}. Therefore, by Theorem~\ref{second thm on the operator P} we have that $D_2\tp{P+N}(\gam_\star,\kappa_\star,0,0)=P(\gam_\star,\kappa_\star,\cdot,\cdot)$ is an isomorphism $\Bar{\bf{X}}_L\to\Bar{\bf{Y}}_L$. So the implicit function theorem applies and so we learn that $\tp{P+N}\tp{\gam,\kappa,u,\eta}=0$ with $\gam-\gam_\star$, $\kappa-\kappa_\star$, $u$, $\eta$ sufficiently small implies that $u=0$ and $\eta=0$.
	\end{proof}
	
	Our next result, which is Theorem~\ref{1st main theorem} of Section~\ref{subsection on main results and discussion}, says that if the speed and tilt parameters are in the complement of the previous theorem's nonexistence set, then one can select the period lengths of the variables in such a way to guarantee the existence of a two parameter family of periodic nontrivial small amplitude solutions to system~\eqref{traveling form of the equations} which emanate from the trivial solution.
	
	\begin{proof}[Proof of Theorem~\ref{1st main theorem}]
		We shall prove this result by way of multiparameter bifurcation using Theorem~\ref{theorem on multiparameter bifurcation}. Let us first remark that it suffices to consider the case that $(\gam_\star,\kappa_\star)\in\mathfrak{E}$. Consider the reflection operator $R(y_1,y_2)=(-y_1,y_2)$; given $(u,\eta)\in\bf{X}_L$ let us define $(w,h)\in\bf{X}_L$ via $h=\eta\circ R$ and $w=Ru\circ R$; in other words
		\begin{equation}\label{reflection equation 1}
			h(x_1,x_2)=\eta(-x_1,x_2)\quad\text{and}\quad w(x_1,x_2)=(-u_1(-x_1,x_2),u_2(-x_1,x_2)),\quad (x_1,x_2)\in\T^2_L.
		\end{equation}
		A routine calculation shows that (for any choice of $\tp{\gam,\kappa}\in\R^2$)
		\begin{equation}\label{reflection equation 2}
			\tp{P+N}(\gam,\kappa,u,\eta)=0\quad\text{if and only if}\quad\tp{P+N}(-\gam,-\kappa,w,h)=0.
		\end{equation}
		Therefore, once the local theory of solutions for the parameter values $(\gam_\star,\kappa_\star)\in\mathfrak{E}$ has been established, we can deduce a corresponding result for $(-\gam_\star,-\kappa_\star)\in-\mathfrak{E}$ via this reflection procedure. So let us assume now that $(\gam_\star,\kappa_\star)\in\mathfrak{E}$.
		
		Our goal is to apply Theorem~\ref{theorem on multiparameter bifurcation} to the smooth mapping $f:\R^2\times\bf{X}_L\to\bf{Y}_L$ defined via $f=P+N$ at the point $(\gam_\star,\kappa_\star,0,0)$. The first hypothesis that $f(\cdot,\cdot,0,0)=0$ is satisfied by inspection. Next, we see that $D_2f(\gam_\star,\kappa_\star,0,0)=P(\gam_\star,\kappa_\star,\cdot,\cdot)$ (by the second item of Proposition~\ref{proposition on analysis of the nonlinearity}) and so we can apply Theorem~\ref{theorem on the operator P} and read off that the linear map $D_2f(\gam_\star,\kappa_\star,0,0):\bf{X}_L\to\bf{Y}_L$ has a two dimensional kernel $E=\m{ker}D_2f(\gam_\star,\kappa_\star,0,0)$, has a closed range $F=\m{ran}D_2f(\gam_\star,\kappa_\star,0,0)$ of codimension 2, and for all $(u,\eta)\in E\setminus\tcb{0}$ it holds that $\bf{Y}_L=F\oplus\m{span}D_{1,1}D_2f(\gam_\star,\kappa_\star,0,0)(u,\eta)\oplus\m{span}D_{1,2}D_2f(\gam_\star,\kappa_\star,0,0)(u,\eta)$. Hence all of the hypotheses of the multiparameter bifurcation theorem are satisfied.
		
		Let us take our complementing subspace: $Z=\bf{Z}_L(\gam,\kappa)\subset\bf{X}_L$ as in Theorem~\ref{theorem on kernel synthesis} and the compact set $K=E\cap\pd B_{\bf{X}_L}(0,1)$. Theorem~\ref{theorem on multiparameter bifurcation} grants us $\ep,r\in\R^+$ along with smooth functions $\Lambda:[-\ep,\ep]\times K\to B_{\R^2}(0,r)$, $W:[-\ep,\ep]\times K\to Z\cap B_{\bf{X}_L}(0,r)$ for which~\eqref{what are these functions really doing} is satisfied. To obtain the functions $\tp{\tilde{\gamma},\tilde{\kappa}}$ from $\Lambda$ and $\tp{\tilde{u},\tilde{\eta}}$ from $W$ we note that because $\Lambda$ and $W$ vanish for $\al=0$ we can divide by $\al$ and retain smoothness, i.e.:
		\begin{multline}
			\tp{\tilde{\gam},\tilde{\kappa}}(\al,\upupsilon,\upeta)=\Lambda(\al,\upupsilon,\upeta)/\al=\int_0^1D_1\Lambda(\tau\al,\upupsilon,\upeta)\;\m{d}\tau,\\\tp{\tilde{u},\tilde{\eta}}(\al,\upupsilon,\upeta)=W(\al,\upupsilon,\eta)/\al=\int_0^1D_1W(\tau\al,\upupsilon,\upeta)\;\m{d}\tau.
		\end{multline}
		We then take the parameter $\bar{r}\in\R^+$ to be large enough so that $(\tilde{\gam},\tilde{\kappa})$ and $(\tilde{u},\tilde{\eta})$ map into balls of radius $\bar{r}$.
		
		By construction we have $\tp{P+N}(\gam,\kappa,u,\eta)=0$ whenever the argument is defined via~\eqref{nontrivial solutions}. Thus to complete the proof of the first item, we need to argue that $\eta\neq0$ for $\al\neq0$. As $\bf{X}_L=E\oplus Z$ and $\al^2\tp{\tilde{u}(\al,\upupsilon,\eta),\tilde{\eta}\tp{\al,\upupsilon,\upeta}}\in Z$, we see that $\al(\upupsilon,\upeta)\neq 0$ implies $(u,\eta)\neq 0$. This happens exactly when $\al\neq0$ because $\tnorm{\tp{\upupsilon,\upeta}}_{\bf{X}_L}=1$. Moreover, since $(\upupsilon,\upeta)\in E=\m{ker}P(\gam_\star,\kappa_\star,\cdot,\cdot)$ we may quote Proposition~\ref{proposition on the reduction of kernels} to see that $\tp{\upupsilon,\upeta}=R_{\gam_\star,\kappa_\star}\upeta$ and hence $1\lesssim\tnorm{\upeta}_{\bf{X}_{L}^{\m{surf}}}$. By definition of $Z=\bf{Z}_L(\gam,\kappa)$ we also observe that $\tilde{\eta}(\al,\upupsilon,\upeta)$ and $\upeta\in\m{ker}Q(\gam_\star,\kappa_\star,\cdot)$ have disjoint Fourier supports and so in fact $\eta\neq0$ for $\al\neq0$.
		
		The validity of the second item of this theorem is immediate from the third conclusion of Theorem~\ref{theorem on multiparameter bifurcation} (which we are permitted to apply here as $K$ contains all kernel vectors of unit length) if we set $\mathcal{O}=U_1$. This completes the proof.
	\end{proof}
	
	Our final result, which is Theorem~\ref{2nd main thm} from Section~\ref{subsection on main results and discussion}, gives qualitative remarks about the solutions produced by the previous result.
	
	\begin{proof}[Proof of Theorem~\ref{2nd main thm}]
		Directly from equation~\eqref{nontrivial solutions} we know that $\eta_\al=\al\upeta+\al^2\tilde{\eta}(\al,\upupsilon,\upeta)$ and $u_\al=\al\upupsilon+\al^2\tilde{u}(\al,\upupsilon,\upeta)$ with $(\upupsilon,\upeta)\in K=\m{ker}P(\gam_\star,\kappa_\star,\cdot,\cdot)$. Hence, thanks to Proposition~\ref{proposition on the reduction of kernels}, we have that $\upeta\in\m{ker}Q(\gam_\star,\kappa_\star,\cdot)\setminus\tcb{0}$, where $Q$ is the operator from~\eqref{the family of auxilary linearized operators Q}, and
		\begin{equation}\label{the upupsilon in terms of the upeta}
			\upupsilon=\kappa_\star\tp{1-\gam\pd_1-\mu\Delta}^{-1}\tp{I+\mathcal{R}\otimes\mathcal{R}}\tp{\upeta e_1}-\gam_\star\mathcal{R}\otimes\mathcal{R}\tp{\upeta e_1}.
		\end{equation}
		Equation~\eqref{the upupsilon in terms of the upeta} readily implies that~\eqref{component 1} and~\eqref{component 2} hold. The kernel of $Q(\gam_\star,\kappa_\star,\cdot)$ is computed in Proposition~\ref{proposition on computation of the reduced kernel} and is spanned by the functions $\upvarphi_+$, $\upvarphi_-$ from equations~\eqref{border case} or~\eqref{interior case} and so $\upeta=A\upvarphi_++B\upvarphi_-$ for some $A,B\in\R$ (not both zero). So the desired estimates of~\eqref{deviation from first order approximation} follow by subtracting $\al\upeta$ from $\eta_\al$ and $\al\upupsilon$ from $u_\al$ and estimating the remainder in $L^\infty(\T^2_L)$ via the Sobolev embedding: $\tilde{\eta}(\al,\upupsilon,\upeta)\in H^3(\T^2_L)\emb L^\infty(\T^2_L)$, $\tilde{u}(\al,\upupsilon,\upeta)\in H^2(\T^2_L;\R^2)\emb L^\infty(\T^2_L;\R^2)$. The constant $c$ in the claimed inequalities can then be taken a function of $\bar{r}$ and these embedding constants.
	\end{proof}
	
	\bibliographystyle{abbrv}
	\bibliography{bib.bib}
\end{document}